\documentclass[a4paper,12pt,reqno]{amsart}
\usepackage{amsfonts}
\usepackage{amsmath}
\usepackage{amssymb}
\usepackage[a4paper]{geometry}
\usepackage{mathrsfs}
\usepackage{csquotes}
\usepackage[colorlinks]{hyperref}
\renewcommand\eqref[1]{(\ref{#1})} 
%
%
\setlength{\textwidth}{15.2cm}
\setlength{\textheight}{22.7cm}
\setlength{\topmargin}{0mm}
\setlength{\oddsidemargin}{3mm}
\setlength{\evensidemargin}{3mm}
\setlength{\footskip}{1cm}


\numberwithin{equation}{section}
\theoremstyle{plain}
\newtheorem{thm}{Theorem}[section]
\newtheorem{prop}[thm]{Proposition}
\newtheorem{cor}[thm]{Corollary}
\newtheorem{lem}[thm]{Lemma}
 \newtheorem{exa}[thm]{Example}
\theoremstyle{definition}

\newtheorem{rem}[thm]{Remark}

\renewcommand{\wp}{\mathfrak S}
\newcommand{\Rn}{\mathbb R^{n}}



\begin{document}

   \title[CKN, remainders and superweights for $L^{p}$-weighted Hardy inequalities]
   {Extended Caffarelli-Kohn-Nirenberg inequalities, and remainders, stability, and superweights for $L^{p}$-weighted Hardy inequalities}

\author[M. Ruzhansky]{Michael Ruzhansky}
\address{
  Michael Ruzhansky:
  \endgraf
  Department of Mathematics
  \endgraf
  Imperial College London
  \endgraf
  180 Queen's Gate, London SW7 2AZ
  \endgraf
  United Kingdom
  \endgraf
  {\it E-mail address} {\rm m.ruzhansky@imperial.ac.uk}
  }
\author[D. Suragan]{Durvudkhan Suragan}
\address{
  Durvudkhan Suragan:
  \endgraf
  Institute of Mathematics and Mathematical Modelling
  \endgraf
  125 Pushkin str.
  \endgraf
  050010 Almaty
  \endgraf
  Kazakhstan
  \endgraf
  {\it E-mail address} {\rm suragan@math.kz}
  }
\author[N. Yessirkegenov]{Nurgissa Yessirkegenov}
\address{
  Nurgissa Yessirkegenov:
  \endgraf
  Institute of Mathematics and Mathematical Modelling
  \endgraf
  125 Pushkin str.
  \endgraf
  050010 Almaty
  \endgraf
  Kazakhstan
  \endgraf
  and
  \endgraf
  Department of Mathematics
  \endgraf
  Imperial College London
  \endgraf
  180 Queen's Gate, London SW7 2AZ
  \endgraf
  United Kingdom
  \endgraf
  {\it E-mail address} {\rm n.yessirkegenov15@imperial.ac.uk}
  }

\thanks{The authors were supported in parts by the EPSRC
 grant EP/K039407/1 and by the Leverhulme Grant RPG-2014-02,
 as well as by the MESRK grant 5127/GF4. No new data was collected or
generated during the course of research.}

     \keywords{Hardy inequality, weighted Hardy inequality, Caffarelli-Kohn-Nirenberg inequality, remainder term, homogeneous Lie group.}
     \subjclass[2010]{22E30, 43A80}

     \begin{abstract}
     In this paper we give an extension of the classical Caffarelli-Kohn-Nirenberg inequalities: we show that
      for $1<p,q<\infty$, $0<r<\infty$ with $p+q\geq r$, $\delta\in[0,1]\cap\left[\frac{r-q}{r},\frac{p}{r}\right]$ with $\frac{\delta r}{p}+\frac{(1-\delta)r}{q}=1$ and $a$, $b$, $c\in\mathbb{R}$ with $c=\delta(a-1)+b(1-\delta)$, and for all functions $f\in C_{0}^{\infty}(\Rn\backslash\{0\})$ we have
  $$
\||x|^{c}f\|_{L^{r}(\Rn)}
\leq \left|\frac{p}{n-p(1-a)}\right|^{\delta} \left\||x|^{a}\nabla f\right\|^{\delta}_{L^{p}(\Rn)}
\left\||x|^{b}f\right\|^{1-\delta}_{L^{q}(\Rn)}
$$
for $n\neq p(1-a)$, where the constant $\left|\frac{p}{n-p(1-a)}\right|^{\delta}$ is sharp for $p=q$ with $a-b=1$ or $p\neq q$ with $p(1-a)+bq\neq0$.
   In the critical case $n=p(1-a)$ we have
 $$
\left\||x|^{c}f\right\|_{L^{r}(\Rn)}
\leq p^{\delta} \left\||x|^{a}\log|x|\nabla f\right\|^{\delta}_{L^{p}(\Rn)}
\left\||x|^{b}f\right\|^{1-\delta}_{L^{q}(\Rn)}.
$$
Moreover, we also obtain anisotropic versions of these inequalities which can be conveniently formulated in the language of Folland and Stein's homogeneous groups.
Consequently, we obtain remainder estimates for $L^{p}$-weighted Hardy inequalities on homogeneous groups, which are also new in the Euclidean setting of $\Rn$. The critical Hardy inequalities of logarithmic type and uncertainty type principles on homogeneous groups are obtained. Moreover, we investigate another improved version of $L^{p}$-weighted Hardy inequalities involving a distance and stability estimates. The relation between the critical and the subcritical Hardy inequalities on homogeneous groups is also investigated. We also establish sharp Hardy type inequalities
in $L^{p}$, $1<p<\infty$, with superweights, i.e. with
the weights of the form $\frac{(a+b|x|^{\alpha})^{\frac{\beta}{p}}}{|x|^{m}}$ allowing for different choices of $\alpha$ and $\beta$.
There are two reasons why we call the appearing weights the superweights: the arbitrariness of the choice of any homogeneous quasi-norm and a wide range of parameters.

     \end{abstract}
     \maketitle

     \tableofcontents

\section{Introduction}
\label{SEC:intro}

The aim of this paper is to give an extension of the classical Caffarelli-Kohn-Nirenberg (CKN) inequalities \cite{CKN84} with respect to ranges of parameters and to investigate the remainders and stability of the weighted $L^p$-Hardy inequalities. Moreover, our methods also provide sharp constants for the CKN inequality for known ranges of parameters as well as give an improvement by replacing the full gradient by the radial derivative. We also obtain the critical case of the CKN inequality with logarithmic terms, and investigate the remainders and other properties in the case when CKN inequalities reduce to the weighted Hardy inequalities.
For the latter, we also establish $L^p$ weighted Hardy inequalities with more general weights of the form $\frac{(a+b|x|^{\alpha})^{\frac{\beta}{p}}}{|x|^{m}}$, allowing for different choices of $m$, $\alpha$ and $\beta$.

\subsection{Extended Caffarelli-Kohn-Nirenberg inequalities}

Let us recall the classical Caffarelli-Kohn-Nirenberg inequality \cite{CKN84}:

\begin{thm}\label{clas_CKN}
Let $n\in\mathbb{N}$ and let $p$, $q$, $r$, $a$, $b$, $d$, $\delta\in \mathbb{R}$ such that $p,q\geq1$, $r>0$, $0\leq\delta\leq1$, and
\begin{equation}\label{clas_CKN0}
\frac{1}{p}+\frac{a}{n},\, \frac{1}{q}+\frac{b}{n},\, \frac{1}{r}+\frac{c}{n}>0
\end{equation}
where $c=\delta d + (1-\delta) b$. Then there exists a positive constant $C$ such that
\begin{equation}\label{clas_CKN1}
\||x|^{c}f\|_{L^{r}(\Rn)}\leq C \||x|^{a}|\nabla f|\|^{\delta}_{L^{p}(\Rn)} \||x|^{b}f\|^{1-\delta}_{L^{q}(\Rn)}
\end{equation}
holds for all $f\in C_{0}^{\infty}(\Rn)$, if and only if the following conditions hold:
\begin{equation}\label{clas_CKN2}
\frac{1}{r}+\frac{c}{n}=\delta \left(\frac{1}{p}+\frac{a-1}{n}\right)+(1-\delta)\left(\frac{1}{q}+\frac{b}{n}\right),
\end{equation}
\begin{equation}\label{clas_CKN3}
a-d\geq 0 \quad {\rm if} \quad \delta>0,
\end{equation}
\begin{equation}\label{clas_CKN4}
a-d\leq 1 \quad {\rm if} \quad \delta>0 \quad {\rm and} \quad \frac{1}{r}+\frac{c}{n}=\frac{1}{p}+\frac{a-1}{n}.
\end{equation}
\end{thm}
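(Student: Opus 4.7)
The plan is to separate the argument into necessity (of \eqref{clas_CKN2}--\eqref{clas_CKN4}) and sufficiency (of \eqref{clas_CKN1}), and to handle sufficiency by proving the endpoints $\delta = 0$ and $\delta = 1$ first and then interpolating.

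For \emph{necessity}, \eqref{clas_CKN2} follows from a purely dimensional argument: applying \eqref{clas_CKN1} to $f_\lambda(x) := f(\lambda x)$ makes each of the three norms a pure power of $\lambda$, and the inequality can hold for all $\lambda > 0$ only when the exponents balance, which is exactly \eqref{clas_CKN2}. Conditions \eqref{clas_CKN3} and \eqref{clas_CKN4} are then obtained by testing the inequality on smoothly truncated powers $|x|^{-t}$, or on bumps concentrated at a single dyadic scale: requiring finiteness of both sides together with correct asymptotic matching forces $a - d \ge 0$, and in the scale-invariant borderline case $\frac{1}{r} + \frac{c}{n} = \frac{1}{p} + \frac{a-1}{n}$ the same test family also forces $a - d \le 1$.

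For \emph{sufficiency} I would first dispose of the endpoints. The case $\delta = 0$ is immediate from \eqref{clas_CKN2}, which forces $r = q$ and $c = b$. The case $\delta = 1$ is a weighted Sobolev inequality $\||x|^c f\|_{L^r} \le C \||x|^a \nabla f\|_{L^p}$, which I would prove by radial symmetrization: replacing $f$ by its radially decreasing rearrangement preserves the left-hand side and does not increase the right one, reducing the claim to the one-dimensional Hardy-type statement
$$
\left(\int_0^\infty \rho^{cr + n - 1} |g(\rho)|^r \, d\rho\right)^{1/r} \le C \left(\int_0^\infty \rho^{ap + n - 1} |g'(\rho)|^p \, d\rho\right)^{1/p},
$$
which follows from $g(\rho) = -\int_\rho^\infty g'(s)\,ds$ and Minkowski's integral inequality, with \eqref{clas_CKN3} (and \eqref{clas_CKN4} in the critical regime) ensuring the required integrability at $0$ and $\infty$. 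For general $\delta \in (0,1)$ I would then interpolate: writing $|x|^{cr} |f|^r = (|x|^d |f|)^{\delta r} (|x|^b |f|)^{(1-\delta) r}$ via $c = \delta d + (1-\delta) b$ and applying Hölder with exponents $\frac{p^*}{\delta r}$ and $\frac{q}{(1-\delta) r}$, one gets
$$
\||x|^c f\|_{L^r} \le \||x|^d f\|_{L^{p^*}}^{\delta} \||x|^b f\|_{L^q}^{1-\delta},
$$
where $p^*$ is the Sobolev exponent defined by $\frac{1}{p^*} = \frac{1}{p} + \frac{a-1-d}{n}$; the Hölder conjugacy of these two exponents is algebraically equivalent to \eqref{clas_CKN2}. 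Applying the $\delta = 1$ weighted Sobolev inequality to $\||x|^d f\|_{L^{p^*}}$ then closes the argument.

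The \emph{main obstacle} is the weighted Sobolev step at $\delta = 1$: symmetrization is clean only when the left-hand weight is radially non-increasing, and one must distinguish the strictly subcritical regime from the scale-invariant critical regime, the latter being precisely where \eqref{clas_CKN4} becomes sharp and where the integration-by-parts / one-dimensional Hardy estimate becomes borderline. A secondary subtlety is the algebraic bookkeeping verifying that the Hölder conjugacy relation arising in the interpolation step really is equivalent to the scaling identity \eqref{clas_CKN2}.
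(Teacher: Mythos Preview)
The paper does not prove this theorem. Theorem~\ref{clas_CKN} is stated in the introduction as a recollection of the classical Caffarelli--Kohn--Nirenberg result, with the explicit attribution ``Let us recall the classical Caffarelli--Kohn--Nirenberg inequality \cite{CKN84}'', and no proof is supplied anywhere in the paper. The paper's own contributions are the \emph{extensions} (Theorems~\ref{clas_CKN-2} and~\ref{THM:CKN-i}, proved in Section~\ref{SEC:CKN} as Theorem~\ref{CKN_thm}), whose proofs proceed via the weighted Hardy inequality for the Euler operator (Theorem~\ref{L_p_weighted_th}) followed by a H\"older splitting --- a different and more restricted argument than what is needed for the full classical statement.

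As for your sketch itself, the overall architecture (scaling for necessity, endpoints plus H\"older interpolation for sufficiency) is the standard one and is essentially how \cite{CKN84} proceeds. However, there is a genuine gap in your $\delta=1$ step: radial symmetrization does \emph{not} directly control $\||x|^{a}\nabla f\|_{L^{p}}$ for general $a$. The P\'olya--Szeg\H{o} inequality applies to the unweighted gradient norm, and with a power weight $|x|^{a}$ (especially for $a>0$) there is no reason the rearranged function should have smaller weighted gradient norm. The original Caffarelli--Kohn--Nirenberg argument avoids symmetrization entirely, working instead with dyadic decompositions in $|x|$ and applying unweighted Gagliardo--Nirenberg on each shell; this is what makes the parameter ranges \eqref{clas_CKN0}--\eqref{clas_CKN4} appear naturally. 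Your one-dimensional Hardy estimate via Minkowski's integral inequality is correct in spirit for the subcritical regime, but it does not by itself handle the full Sobolev-type gain when $r>p$, which is where the genuine work lies.
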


The first aim of this paper is to extend the CKN-inequalities for general functions with respect to widening the range of indices \eqref{clas_CKN0}. Moreover, another improvement will be achieved by replacing the full gradient $\nabla f$ in \eqref{clas_CKN1} by the radial derivative
$\mathcal{R}f=\frac{\partial f}{\partial r}$. It turns out that such improved versions can be establsihed with sharp constants, and to hold both in the isotropic and anisotropic settings.

To compare with Theorem \ref{clas_CKN} let us first formulate the isotropic version of our extension in the usual setting of $\Rn$.

\begin{thm}\label{clas_CKN-2}
Let $n\in\mathbb{N}$, $1<p,q<\infty$, $0<r<\infty$, with $p+q\geq r$, $\delta\in[0,1]\cap\left[\frac{r-q}{r},\frac{p}{r}\right]$ and $a$, $b$, $c\in\mathbb{R}$. Assume that $\frac{\delta r}{p}+\frac{(1-\delta)r}{q}=1$,  $c=\delta(a-1)+b(1-\delta)$. If $n\neq p(1-a)$, then for any function $f\in C_{0}^{\infty}(\mathbb{R}^{n}\backslash\{0\})$ we have
\begin{equation}\label{EQ:aq}
\||x|^{c}f\|_{L^{r}(\Rn)}
\leq \left|\frac{p}{n-p(1-a)}\right|^{\delta} \left\||x|^{a}\left(\frac{x}{|x|}\cdot\nabla f\right)\right\|^{\delta}_{L^{p}(\Rn)}
\left\||x|^{b}f\right\|^{1-\delta}_{L^{q}(\Rn)}.
\end{equation}

In the critical case $n=p(1-a)$ for any function $f\in C_{0}^{\infty}(\mathbb{R}^{n}\backslash\{0\})$ we have
\begin{equation}\label{CKN_rem3a}
\left\||x|^{c}f\right\|_{L^{r}(\Rn)}
\leq p^{\delta} \left\||x|^{a}\log|x|\left(\frac{x}{|x|}\cdot\nabla f\right)\right\|^{\delta}_{L^{p}(\Rn)}
\left\||x|^{b}f\right\|^{1-\delta}_{L^{q}(\Rn)},
\end{equation}
for any homogeneous quasi-norm $|\cdot|$. If $|\cdot|$ is the Euclidean norm on $\Rn$, inequalities
\eqref{EQ:aq} and \eqref{CKN_rem3a} imply, respectively,
\begin{equation}\label{EQ:aqe}
\||x|^{c}f\|_{L^{r}(\Rn)}
\leq \left|\frac{p}{n-p(1-a)}\right|^{\delta} \left\||x|^{a}\nabla f\right\|^{\delta}_{L^{p}(\Rn)}
\left\||x|^{b}f\right\|^{1-\delta}_{L^{q}(\Rn)}
\end{equation}
for $n\neq p(1-a)$, and
\begin{equation}\label{CKN_rem3ae}
\left\||x|^{c}f\right\|_{L^{r}(\Rn)}
\leq p^{\delta} \left\||x|^{a}\log|x|\nabla f\right\|^{\delta}_{L^{p}(\Rn)}
\left\||x|^{b}f\right\|^{1-\delta}_{L^{q}(\Rn)},
\end{equation}
for $n=p(1-a)$. The inequality \eqref{EQ:aq} holds for any homogeneous quasi-norm $|\cdot|$, and the constant
$\left|\frac{p}{n-p(1-a)}\right|^{\delta}$ is sharp for $p=q$ with $a-b=1$ or for $p\neq q$ with $p(1-a)+bq\neq0$. Furthermore, the constants $\left|\frac{p}{n-p(1-a)}\right|^{\delta}$ and $p^{\delta}$ are sharp for $\delta=\overline{0,1}$.
\end{thm}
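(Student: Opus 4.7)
The plan is to derive the extended CKN inequalities \eqref{EQ:aq} and \eqref{CKN_rem3a} by combining Hölder's inequality with the sharp $L^{p}$-weighted Hardy inequality for the radial derivative. Specifically, I will use the one-dimensional (radial) bounds
\[
\||x|^{a-1}f\|_{L^{p}(\Rn)} \leq \frac{p}{|n-p(1-a)|}\,\bigl\||x|^{a}\mathcal{R}f\bigr\|_{L^{p}(\Rn)}, \qquad n\neq p(1-a),
\]
and its critical ($n=p(1-a)$) logarithmic counterpart
\[
\||x|^{a-1}f\|_{L^{p}(\Rn)} \leq p\,\bigl\||x|^{a}\log|x|\,\mathcal{R}f\bigr\|_{L^{p}(\Rn)};
\]
both of these are proved, with sharp constants and for an arbitrary homogeneous quasi-norm, in the $L^{p}$-Hardy section of the paper via a direct radial integration-by-parts.

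For the Hölder step, using $c=\delta(a-1)+b(1-\delta)$ and $r=\delta r+(1-\delta)r$, I will factorise
\[
|x|^{cr}|f|^{r}=\bigl(|x|^{(a-1)p}|f|^{p}\bigr)^{\delta r/p}\bigl(|x|^{bq}|f|^{q}\bigr)^{(1-\delta)r/q}
\]
and apply Hölder's inequality with conjugate exponents $p/(\delta r)$ and $q/((1-\delta)r)$. The scaling assumption $\frac{\delta r}{p}+\frac{(1-\delta)r}{q}=1$ is precisely the conjugacy relation, and the parameter range $\delta\in[0,1]\cap[\frac{r-q}{r},\frac{p}{r}]$ is exactly the condition that both exponents lie in $[1,\infty]$. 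This yields
\[
\||x|^{c}f\|_{L^{r}(\Rn)} \leq \||x|^{a-1}f\|_{L^{p}(\Rn)}^{\delta}\,\||x|^{b}f\|_{L^{q}(\Rn)}^{1-\delta},
\]
after which inserting the (sharp) Hardy bound above in the first factor produces \eqref{EQ:aq} and \eqref{CKN_rem3a} in the two regimes.

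For sharpness of $|p/(n-p(1-a))|^{\delta}$, two separate arguments are required. When $p=q$ the Hölder constraint forces $r=p$; if in addition $a-b=1$, then $c=a-1$, Hölder is a tautology, and \eqref{EQ:aq} becomes, after dividing by $\||x|^{a-1}f\|^{1-\delta}$, precisely the weighted Hardy inequality with its sharp constant, from which the claim follows. When $p\neq q$ with $p(1-a)+bq\neq 0$, I will test with a cut-off of the Hardy extremizer, $f_{\varepsilon}(x)=\phi_{\varepsilon}(|x|)\,|x|^{-(n-p(1-a))/p}$, where $\phi_{\varepsilon}\in C_{0}^{\infty}$ equals $1$ on $\{2\varepsilon<|x|<1/(2\varepsilon)\}$ and is supported in $\{\varepsilon<|x|<1/\varepsilon\}$; the hypothesis $p(1-a)+bq\neq 0$ ensures that $\||x|^{b}f_{\varepsilon}\|_{L^{q}}^{q}$ is a convergent power integral (as opposed to a logarithmic one) and that all three weighted integrals exhibit matching leading behaviour on the same annulus, so the ratio of the two sides of \eqref{EQ:aq} tends to $|p/(n-p(1-a))|^{\delta}$ as $\varepsilon\to 0$. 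The endpoint cases $\delta\in\{0,1\}$ are immediate, reducing to a tautology and to the sharp Hardy inequality, respectively; the sharpness of $p^{\delta}$ in the critical case is analogous with a logarithmic weight.

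The main obstacle will be this sharpness argument when $p\neq q$: one has to verify that a single family of truncated powers simultaneously saturates both Hölder and Hardy asymptotically and to track the cut-off losses in the three weighted integrals so that they cancel at exactly the prescribed power. The non-degeneracy condition $p(1-a)+bq\neq 0$ is precisely what prevents a logarithmic blow-up of $\||x|^{b}f_{\varepsilon}\|_{L^{q}}$ from contaminating this cancellation.
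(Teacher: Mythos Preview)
Your derivation of \eqref{EQ:aq} and \eqref{CKN_rem3a} via the factorisation
\[
|x|^{cr}|f|^{r}=\bigl(|x|^{(a-1)p}|f|^{p}\bigr)^{\delta r/p}\bigl(|x|^{bq}|f|^{q}\bigr)^{(1-\delta)r/q},
\]
followed by H\"older's inequality and then the sharp weighted Hardy bound on the first factor, is exactly the paper's argument. Your treatment of the endpoints $\delta\in\{0,1\}$ and of sharpness when $p=q$ with $a-b=1$ also matches.

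The gap is in your sharpness argument for $p\neq q$. Truncations of the Hardy extremiser $|x|^{C}$ with $C=-(n-p(1-a))/p$ do not asymptotically saturate the H\"older splitting: with that exponent one has $|x|^{(a-1)p}|f_{\varepsilon}|^{p}\sim |x|^{-n}$, so $\||x|^{a-1}f_{\varepsilon}\|_{L^{p}}$ and $\||x|^{a}\mathcal{R}f_{\varepsilon}\|_{L^{p}}$ diverge only logarithmically, whereas $\||x|^{b}f_{\varepsilon}\|_{L^{q}}$ and $\||x|^{c}f_{\varepsilon}\|_{L^{r}}$ grow like matching nonzero powers of $1/\varepsilon$. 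The powers of $\varepsilon$ cancel between the two sides, but the surviving factor $(\log(1/\varepsilon))^{\delta/p}$ on the right forces
\[
\frac{\||x|^{c}f_{\varepsilon}\|_{L^{r}}}{\||x|^{a}\mathcal{R}f_{\varepsilon}\|_{L^{p}}^{\delta}\,\||x|^{b}f_{\varepsilon}\|_{L^{q}}^{1-\delta}}\longrightarrow 0
\]
rather than to $|p/(n-p(1-a))|^{\delta}$. Your reading of the hypothesis $p(1-a)+bq\neq 0$ is also off: for the Hardy-extremiser exponent $C$ it is not this quantity that decides whether $\||x|^{b}f_{\varepsilon}\|_{L^{q}}^{q}$ is logarithmic.

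The paper tests instead with the power $h(x)=|x|^{(p(1-a)+bq)/(p-q)}$, chosen so that $|x|^{(a-1)p}|h|^{p}=|x|^{bq}|h|^{q}$ and hence the H\"older splitting is a (formal) equality; since any nonzero power also satisfies the H\"older equality condition inside the proof of the weighted Hardy inequality, the role of the condition $p(1-a)+bq\neq 0$ in the paper is precisely to ensure that this exponent is nonzero. If you want to run a truncation argument along your lines, it should be built around this exponent rather than the Hardy extremiser.
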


Note that if the conditions \eqref{clas_CKN0}
hold, then the inequality \eqref{EQ:aqe} is contained in the family of Caffarelli-Kohn-Nirenberg inequalities in Theorem \ref{clas_CKN}.
However, already in this case, if we require $p=q$ with $a-b=1$ or $p\neq q$ with $p(1-a)+bq\neq0$, then \eqref{EQ:aqe} yields the inequality \eqref{clas_CKN1} with sharp constant. Moreover, the constants $\left|\frac{p}{n-p(1-a)}\right|^{\delta}$ and $p^{\delta}$ are sharp for $\delta=0$ or $\delta=1$. Our conditions $\frac{\delta r}{p}+\frac{(1-\delta)r}{q}=1$ and $c=\delta(a-1)+b(1-\delta)$ imply the condition \eqref{clas_CKN2} of the Theorem \ref{clas_CKN}, as well as \eqref{clas_CKN3}-\eqref{clas_CKN4} which are all necessary for having estimates of this type, at least under the conditions \eqref{clas_CKN0}.

If the conditions \eqref{clas_CKN0} are not satisfied, then the inequality \eqref{EQ:aqe} is not covered by Theorem \ref{clas_CKN}. So, this gives an extension of Theorem \ref{clas_CKN} with respect to the range of parameters. Let us give an example:

\begin{exa}\label{CKN_example} Let us take $1<p=q=r<\infty$, $a=-\frac{n-2p}{p}$, $b=-\frac{n}{p}$ and $c=-\frac{n-\delta p}{p}$.
Then by \eqref{EQ:aqe}, for all $f\in C_{0}^{\infty}(\mathbb{R}^{n}\backslash\{0\})$ we have  the inequality
\begin{equation}\label{CKN_example1}
\left\|\frac{f}{|x|^{\frac{n-\delta p}{p}}}\right\|_{L^{p}(\Rn)}\leq
\left\|\frac{\nabla f}{|x|^{\frac{n-2p}{p}}}\right\|^{\delta}_{L^{p}(\Rn)}
\left\|\frac{f}{|x|^{\frac{n}{p}}}\right\|^{1-\delta}_{L^{p}(\Rn)}, \quad 1<p<\infty, \;0\leq\delta\leq1,
\end{equation}
where $\nabla$ is the standard gradient in $\Rn$. Since we have
$$\frac{1}{q}+\frac{b}{n}=\frac{1}{p}+\frac{1}{n}\left(-\frac{n}{p}\right)=0,$$
we see that \eqref{clas_CKN0} fails, so that the inequality \eqref{CKN_example1} is not covered by Theorem \ref{clas_CKN}. Moreover, in this case, $p=q$ with $a-b=1$ hold true, so that the constant $\left|\frac{p}{n-p(1-a)}\right|^{\delta}=1$ in the inequality \eqref{CKN_example1} is sharp.
\end{exa}

Although these results are new already in the usual setting of $\Rn$, our techniques apply well also for the anisotropic structures. Consequently, it is convenient  to work in the setting of homogeneous groups developed by Folland and Stein \cite{FS-Hardy} with an idea of  emphasising general results of harmonic analysis depending only of the group and dilation structures.
In particular, in this way we obtain results on the anisotropic $\Rn$, on the Heisenberg group, general stratified groups, graded groups, etc. In the special case of stratified groups (or homogeneous Carnot groups), other formulations using horizontal gradient are possible, and we refer to \cite{Ruzhansky-Suragan:Layers} and especially to \cite{Ruzhansky-Suragan:JDE} for versions of such results and the discussion of the corresponding literature.

The improved versions of the Caffarelli-Kohn-Nirenberg inequality for radially symmetric functions with respect to the range of parameters was investigated in \cite{NDD12}. In \cite{ZhHD15} and \cite{HZh11}, weighted Hardy type inequalities were obtained for the generalised Baouendi-Grushin vector fields, which is when $\gamma=0$ gives the standard gradient in $\Rn$. We also refer to \cite{HNZh11}, \cite{Han15} for weighted Hardy inequalities on the Heisenberg group, to \cite{HZhD11} and \cite{ZhHD14} on the H-type groups, and a recent paper \cite{Yacoub17} on Lie groups of polynomial growth as well as to references therein.

In Section \ref{SEC:prelim} we very briefly recall the necessary notions and fix the notation in more detail. Assuming the notation there, Theorem \ref{clas_CKN-2} is the special case of the following theorem that we prove in this paper:

\begin{thm}\label{THM:CKN-i}
Let $\mathbb{G}$ be a homogeneous group
of homogeneous dimension $Q$. Let $|\cdot|$ be an arbitrary homogeneous quasi-norm on $\mathbb{G}$. Let $1<p,q<\infty$, $0<r<\infty$ with $p+q\geq r$, $\delta\in[0,1]\cap\left[\frac{r-q}{r},\frac{p}{r}\right]$ and $a$, $b$, $c\in\mathbb{R}$. Assume that $\frac{\delta r}{p}+\frac{(1-\delta)r}{q}=1$ and $c=\delta(a-1)+b(1-\delta)$. Then
for all $f\in C_{0}^{\infty}(\mathbb{G}\backslash\{0\})$ we have the following Caffarelli-Kohn-Nirenberg type inequalities, with $\mathcal{R}:=\frac{d}{d|x|}$ being the radial derivative:
If $Q\neq p(1-a)$, then
$$
\||x|^{c}f\|_{L^{r}(\mathbb{G})}
\leq \left|\frac{p}{Q-p(1-a)}\right|^{\delta} \left\||x|^{a}\mathcal{R}f\right\|^{\delta}_{L^{p}(\mathbb{G})}
\left\||x|^{b}f\right\|^{1-\delta}_{L^{q}(\mathbb{G})},
$$
where the constant $\left|\frac{p}{Q-p(1-a)}\right|^{\delta}$ is sharp for $p=q$ with $a-b=1$ or $p\neq q$ with $p(1-a)+bq\neq0$.
If $Q=p(1-a)$, then
$$
\left\||x|^{c}f\right\|_{L^{r}(\mathbb{G})}
\leq p^{\delta} \left\||x|^{a}\log|x|\mathcal{R}f\right\|^{\delta}_{L^{p}(\mathbb{G})}
\left\||x|^{b}f\right\|^{1-\delta}_{L^{q}(\mathbb{G})}.
$$ Moreover, the constants $\left|\frac{p}{Q-p(1-a)}\right|^{\delta}$ and $p^{\delta}$ are sharp for $\delta=\overline{0,1}$.
\end{thm}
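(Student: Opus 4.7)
My plan is to reduce the Caffarelli--Kohn--Nirenberg inequality of Theorem \ref{THM:CKN-i} to two independent ingredients: an $L^p$-weighted radial Hardy inequality on $\mathbb{G}$, and H\"older's inequality used as interpolation. The algebraic conditions $\frac{\delta r}{p}+\frac{(1-\delta)r}{q}=1$ and $c=\delta(a-1)+b(1-\delta)$ are exactly what makes the weights on the two sides of the H\"older step match, while $p+q\geq r$ ensures the corresponding H\"older exponents $\frac{p}{\delta r}$ and $\frac{q}{(1-\delta)r}$ are each $\geq 1$.

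First I would prove the weighted Hardy inequality
\[
\||x|^{a-1}f\|_{L^p(\mathbb{G})} \leq \left|\tfrac{p}{Q-p(1-a)}\right|\, \||x|^a \mathcal{R}f\|_{L^p(\mathbb{G})}
\]
for $Q\neq p(1-a)$. Using polar decomposition $dx=r^{Q-1}\,dr\,d\sigma(y)$ with $r=|x|$ and setting $\alpha=(a-1)p+Q-1$ so that $\alpha+1=Q-p(1-a)\neq 0$, one writes $r^\alpha=\frac{1}{\alpha+1}\frac{d}{dr}(r^{\alpha+1})$, integrates by parts in $r$ against $|f(ry)|^p$, and applies H\"older with exponents $p/(p-1)$ and $p$ to extract the desired bound. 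Then H\"older with exponents $\frac{p}{\delta r}$ and $\frac{q}{(1-\delta)r}$ applied to
\[
\int_{\mathbb{G}} |x|^c |f|^r\, dx = \int_{\mathbb{G}} \bigl(|x|^{a-1}|f|\bigr)^{\delta r}\bigl(|x|^b |f|\bigr)^{(1-\delta)r}\, dx,
\]
combined with the Hardy inequality above, produces the claimed non-critical CKN bound.

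For the critical case $Q=p(1-a)$, the factor $\alpha+1$ vanishes, so I would replace the antiderivative $r^{\alpha+1}/(\alpha+1)$ by $\log r$. Since $(a-1)p+Q-1=-1$ in this regime, the identity $r^{-1}=\frac{d}{dr}\log r$ drives an analogous integration-by-parts-and-H\"older argument and yields the logarithmic Hardy inequality
\[
\||x|^{a-1}f\|_{L^p(\mathbb{G})} \leq p\, \||x|^a \log|x|\, \mathcal{R}f\|_{L^p(\mathbb{G})},
\]
after which the same H\"older interpolation concludes the proof.

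The main obstacle is sharpness. For $\delta=0$ the conditions force $r=q$ and $c=b$, so the inequality collapses to an identity, while for $\delta=1$ one has $r=p$ and $c=a-1$ and the inequality reduces to the weighted Hardy step, whose sharpness is classical. In the remaining regime I would test with radial profiles of the form $f_\varepsilon(x)=|x|^{-(Q-p(1-a))/p}\varphi_\varepsilon(|x|)$, where $\varphi_\varepsilon$ is a smooth cutoff whose support is chosen to concentrate near the scale where Hardy is saturated; the delicate point is that one must simultaneously saturate the H\"older step used to interpolate between the $L^p$ and $L^q$ factors. A scaling analysis of $f_\varepsilon$ shows that this synchronised saturation is possible exactly when either $p=q$ with $a-b=1$ (the weights in the H\"older step align trivially) or $p\neq q$ with $p(1-a)+bq\neq 0$ (so that a one-parameter rescaling of $\varphi_\varepsilon$ can match both sides), which accounts for the dichotomy stated in the theorem.
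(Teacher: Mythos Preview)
Your proposal is correct and follows essentially the same route as the paper: the paper also reduces the CKN inequality to the weighted radial Hardy inequality (proved by integration by parts in $r$ plus H\"older, with the logarithmic variant in the critical case) and then interpolates via H\"older using precisely the relations $\frac{\delta r}{p}+\frac{(1-\delta)r}{q}=1$ and $c=\delta(a-1)+b(1-\delta)$. The only noteworthy difference is in the sharpness argument for $\delta\in(0,1)$: rather than a scaling analysis of cutoff approximants, the paper simply checks the equality conditions in the two H\"older steps against the explicit power $h(x)=|x|^{(p(1-a)+bq)/(p-q)}$ when $p\neq q$ (which requires $p(1-a)+bq\neq 0$ so that $h$ is nonconstant and also saturates the Hardy step), and observes that when $p=q$ with $a-b=1$ the interpolation H\"older step is an identity for every function, so any Hardy extremiser works---this is the same mechanism your ``synchronised saturation'' is describing, just phrased more directly.
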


\subsection{$L^{p}$-weighted Hardy inequalities}

Let us recall the following $L^{p}$-weighted Hardy inequality
\begin{equation}\label{Lp_Hardy}
\int_{\Rn}\frac{|\nabla f(x)|^{p}}{|x|^{\alpha p}}dx\geq\left(\frac{n-p-\alpha p}{p}\right)^{p}\int_{\Rn}\frac{|f(x)|^{p}}{|x|^{(\alpha+1)p}}dx
\end{equation}
for every function $f\in C_{0}^{\infty}(\Rn)$, where $-\infty<\alpha<\frac{n-p}{p}$ and $2\leq p<n$. The inequality \eqref{Lp_Hardy} is a special case of the Caffarelli-Kohn-Nirenberg inequalities \cite{CKN84}, recalled also in Theorem \ref{clas_CKN}. Since in this paper we are also interested in remainder estimates for the $L^{p}$-weighted Hardy inequality, let us introduce known results in this direction. Overall, the study of remainders in Hardy and other related inequalities is a classical topic going back to \cite{Brez1, Brez2, BV97}.

Ghoussoub and Moradifam \cite{GM08} proved that there exists no strictly positive function $V\in C^{1}(0,\infty)$ such that the inequality
$$\int_{\Rn}|\nabla f|^{2}dx\geq\left(\frac{n-2}{2}\right)^{2}\int_{\Rn}\frac{|f|^{2}}{|x|^{2}}dx+\int_{\Rn}V(|x|)|f|^{2}dx$$
holds for any $f\in W^{1,2}(\Rn)$. Cianchi and Ferone \cite {CF08} showed that for all $1<p<n$ there exists a constant $C=C(p,n)$ such that
$$\int_{\Rn}|\nabla f|^{p}dx\geq\left(\frac{n-p}{p}\right)^{p}\int_{\Rn}\frac{|f|^{p}}{|x|^{p}}dx\,(1+Cd_{p}(f)^{2p^{*}})$$
holds for all real-valued weakly differentiable functions $f$ in $\Rn$ such that $f$ and $|\nabla f|\in L^{p}(\Rn)$ go to zero at infinity. Here
$$d_{p}f=\underset{c\in \mathbb{R}}{\rm inf}\frac{\|f-c|x|^{-\frac{n-p}{p}}\|_{L^{p^{*},\infty}(\Rn)}}{\|f\|_{L^{p^{*},p}(\Rn)}}$$ with $p^{*}=\frac{np}{n-p}$, and $L^{\tau, \sigma}(\Rn)$ is the Lorentz space for $0<\tau\leq \infty$ and $1\leq\sigma\leq\infty$.
In the case of a bounded domain $\Omega$, Wang and Willem \cite{WW03} for $p=2$ and Abdellaoui, Colorado and Peral \cite{ACP05} for $1<p<\infty$ investigated the improved type of \eqref{Lp_Hardy} (see also \cite{ST15a} and \cite{ST15b} for more details).

For more general Lie group discussions of above inequalities we refer to recent papers \cite{Ruzhansky-Suragan:Layers}, \cite{Ruzhansky-Suragan:squares} and \cite{Ruzhansky-Suragan:JDE} as well as references therein.

Sometimes the improved versions of different inequalities, or remainder estimates, are called stability of the inequality if the estimates depend on certain distances: see, e.g. \cite{BJOS16} for stability of trace theorems, \cite{CFW13} for stability of Sobolev inequalities, etc.

We also note that Sano and Takahashi obtained the improved version of \eqref{Lp_Hardy} in \cite{ST15a} for $\Omega=\Rn$ and $\alpha=0$ and then in \cite{ST15b} for any $-\infty<\alpha<\frac{n-p}{p}$:
Let $n\geq 3$, $2\leq p<n$ and $-\infty<\alpha<\frac{n-p}{p}$. Let $N\in \mathbb{N}$, $t\in (0,1)$, $\gamma<\min\{1-t, \frac{p-N}{p}\}$ and  $\delta=N-n+\frac{N}{1-t-\gamma}\left(\gamma+\frac{n-p-\alpha p}{p}\right)$.
Then there exists a constant $C>0$ such that the inequality
$$\int_{\Rn}\frac{|\nabla f|^{p}}{|x|^{\alpha p}}dx-\left(\frac{n-p-\alpha p}{p}\right)^{p}\int_{\Rn}\frac{|f|^{p}}{|x|^{p(\alpha+1)}}dx
\geq C\frac{\left(\int_{\Rn}|f|^{\frac{N}{1-t-\gamma}}|x|^{\delta}dx\right)^{\frac{p(1-t-\gamma)}{Nt}}}
{\left(\int_{\Rn}|f|^{p}|x|^{-\alpha p}dx\right)^{\frac{1-t}{t}}}
$$
holds for any radially symmetric function $f\in W_{0,\alpha}^{1,p}(\mathbb{R}^{n})$, $f\neq 0$.

For the convenience of the reader we now briefly recapture the main results of this part of the  paper, formulating them directly in the anisotropic cases following the notation recalled in Section \ref{SEC:prelim}.
Thus, we show that for a homogeneous group $\mathbb{G}$ of homogeneous dimension $Q$ and any homogeneous quasi-norm $|\cdot|$ we have the following results:
\begin{itemize}
\item ({\bf Remainder estimates for the $L^{p}$-weighted Hardy inequality})
Let $2\leq p<Q$, $-\infty<\alpha<\frac{Q-p}{p}$ and $\delta_{1}=Q-p-\alpha p-\frac{Q+pb}{p}$, $\delta_{2}=Q-p-\alpha p-\frac{bp}{p-1}$ for any $b\in\mathbb{R}$. Then
for all functions $f\in C_{0}^{\infty}(\mathbb{G}\backslash\{0\})$ we have
$$\int_{\mathbb{G}}\frac{|\mathcal{R}f(x)|^{p}}{|x|^{\alpha p}}dx-\left(\frac{Q-p-\alpha p}{p}\right)^{p}\int_{\mathbb{G}}\frac{|f(x)|^{p}}{|x|^{(\alpha+1)p}}dx$$
$$
\geq C_{p} \frac{\left(\int_{\mathbb{G}}|f(x)|^{p}|x|^{\delta_{1}}dx\right)^{p}}
{\left(\int_{\mathbb{G}}|f(x)|^{p}|x|^{\delta_{2}}dx\right)^{p-1}},
$$
where $C_{p}=c_{p}
\left|\frac{Q(p-1)-pb}{p^{2}}\right|^{p}$, $\mathcal{R}:=\frac{d}{d|x|}$ is the radial derivative and $c_{p}=\underset{0<t\leq1/2}{\rm min}((1-t)^{p}-t^{p}+pt^{p-1})$. This family is a new result already in the standard setting of $\Rn$.
\item ({\bf Stability of Hardy inequalities})
Let $2\leq p<Q$ and $-\infty<\alpha<\frac{Q-p}{p}$. Then for all radial functions $f\in C_{0}^{\infty}(\mathbb{G}\backslash\{0\})$ we have the stability estimate
$$\int_{\mathbb{G}}\frac{|\mathcal{R}f(x)|^{p}}{|x|^{\alpha p}}dx-\left(\frac{Q-p-\alpha p}{p}\right)^{p}\int_{\mathbb{G}}\frac{|f(x)|^{p}}{|x|^{(\alpha+1)p}}dx$$
$$
\geq c_{p}\left(\frac{p-1}{p}\right)^{p}\sup_{R>0}d_{R}(f,c_{f}(R)f_{\alpha})^{p},
$$
where $c_{f}(R)=R^{\frac{Q-p-\alpha p}{p}}\widetilde{f}(R)$ with $f(x)=\widetilde{f}(r)$, $r=|x|$,
$\mathcal{R}:=\frac{d}{d|x|}$ is the radial derivative, $c_{p}$ is defined in Lemma \ref{FrS}, $f_{\alpha}$ and $d_{R}(\cdot,\cdot)$ are defined in \eqref{aremterm7} and \eqref{aremterm8}, respectively.

\item ({\bf Critical Hardy inequalities of logarithmic type})
Let $1<\gamma<\infty$ and let $\max\{1,\gamma-1\}<p<\infty$.
Then for all $f\in C_{0}^{\infty}(\mathbb{G}\backslash\{0\})$ and all $R>0$ we have
$$\frac{p}{\gamma-1}\left\|
\frac{\mathcal{R}f}{|x|^{\frac{Q-p}{p}}\left(\log\frac{R}{|x|}\right)^{\frac{\gamma-p}{p}}} \right\|_{L^{p}(\mathbb{G})}
\geq
\left\|\frac{f-f_{R}}{|x|^{\frac{Q}{p}}\left(\log\frac{R}{|x|}\right)^{\frac{\gamma}{p}}}\right\|_{L^{p}(\mathbb{G})},
$$
where $f_{R}=f\left(R\frac{x}{|x|}\right)$,  where $\mathcal{R}:=\frac{d}{d|x|}$ is the radial derivative, and the constant $\frac{p}{\gamma-1}$ is optimal. In the abelian case, this result was obtained in \cite{MOW15}. In the case $\gamma=p$ this result on the homogeneous group was proved in \cite{Ruzhansky-Suragan:critical}.
\item ({\bf Uncertainty inequalities})
Let $1<p<\infty$ and $q>1$ be such that
$\frac{1}{p}+\frac{1}{q}=\frac{1}{2}$. Let $1<\gamma<\infty$ and $\max\{1,\gamma-1\}<p<\infty$. Then for any $R>0$ and $f\in C_{0}^{\infty}(\mathbb{G}\backslash\{0\})$ we have the uncertainty inequalities
$$
\left\|
\frac{\mathcal{R}f}{|x|^{\frac{Q-p}{p}}\left(\log\frac{R}{|x|}\right)^{\frac{\gamma-p}{p}}} \right\|_{L^{p}(\mathbb{G})}\|f\|_{L^{q}(\mathbb{G})}
\geq\frac{\gamma-1}{p}\left\|\frac{f(f-f_{R})}
{|x|^{\frac{Q}{p}}\left(\log\frac{R}{|x|}\right)^{\frac{\gamma}{p}}} \right\|_{L^{2}(\mathbb{G})},
$$
where $\mathcal{R}:=\frac{d}{d|x|}$ is the radial derivative (see \eqref{EQ:Euler}). Moreover,
$$
\left\|
\frac{\mathcal{R}f}{|x|^{\frac{Q-p}{p}}\left(\log\frac{R}{|x|}\right)^{\frac{\gamma-p}{p}}} \right\|_{L^{p}(\mathbb{G})}\left\|\frac{f-f_{R}}
{|x|^{\frac{Q}{p'}}\left(\log\frac{R}{|x|}\right)^{2-\frac{\gamma}{p}}}
\right\|_{L^{p'}(\mathbb{G})}$$$$\geq\frac{\gamma-1}{p}\left\|\frac{f-f_{R}}
{|x|^{\frac{Q}{2}}\log\frac{R}{|x|}}\right\|^{2}_{L^{2}(\mathbb{G})}
$$
holds for $\frac{1}{p}+\frac{1}{p'}=1$.

\item ({\bf Relation between critical and subcritical Hardy inequalities})
Let $Q\geq m+1$, $m\geq2$. Let $|\cdot|$ be a homogeneous quasi-norm. Then for any nonnegative radial function $g\in C_{0}^{1}(B^{m}(0,R)\backslash\{0\})$, there exists a nonnegative radial function $f\in C_{0}^{1}(B^{Q}(0,1)\backslash\{0\})$ such that
$$\int_{B^{Q}(0,1)}|\mathcal{R}f|^{m}dx-\left(\frac{Q-m}{m}\right)^{m}\int_{B^{Q}(0,1)}\frac{|f|^{m}}{|x|^{m}}dx$$
$$
=\frac{|\sigma|}
{|\widetilde{\sigma}|}\left(\frac{Q-m}{m-1}\right)^{m-1}$$
$$\times\left(\int_{B^{m}(0,R)}|\mathcal{R}g|^{m}dz-
\left(\frac{m-1}{m}\right)^{m}\int_{B^{m}(0,R)}\frac{|g|^{m}}{|z|^{m}\left(\log\frac{Re}{|z|}\right)^{m}}dz\right)
$$
holds true, where $\mathcal{R}:=\frac{d}{d|x|}$ is the radial derivative, $|\sigma|$ and $|\widetilde{\sigma}|$ are $Q-1$ and $m-1$ dimensional surface measure of the unit sphere, respectively.
\end{itemize}

\subsection{$L^p$-Hardy inequalities with superweights}

The classical Hardy inequalities and their extensions, such as the Caffarelli-Kohn-Nirenberg inequalities, usually involve the weights of the form $\frac{1}{|x|^{m}}$. In this paper, we also consider the weights of the form $\frac{(a+b|x|^{\alpha})^{\frac{\beta}{p}}}{|x|^{m}}$ allowing for different choices of $\alpha$ and $\beta$. If $\alpha=0$ or $\beta=0$, this reduces to traditional weights. So, we are interested in the case when $\alpha\beta\not=0$ and, in fact, we obtain two families of inequalities depending on whether $\alpha\beta>0$ or $\alpha\beta<0$. Moreover, $|\cdot|$ in these expressions can be an arbitrary homogeneous quasi-norm and the constants for the obtained inequalities are sharp. The freedom in choosing parameters $\alpha,\beta, a,b,m$ and a quasi-norm led us to calling these weights the `superweights' in this context.

Again, the obtained estimates will include both the isotropic and anisotropic settings of $\mathbb R^{n}$, for which our range of obtained estimates appears also to be new. Namely, already in the Euclidean case of $\mathbb R^{n}$ with the Euclidean norm, they extend the inequalities that have been known for $p=2$ for some range of parameters from \cite{GM11} to the full range of $1<p<\infty$.

Therefore, we can again work on the homogeneous groups.
To summarise, on a homogeneous group $\mathbb{G}$
with homogeneous dimension $Q$ for any homogeneous quasi-norm $|\cdot|$ on $\mathbb{G}$, all $a,b>0$ and $1<p<\infty$ we prove that
\begin{itemize}
\item If $\alpha \beta>0$ and $pm\leq Q-p$, then for all $f\in C_{0}^{\infty}(\mathbb{G}\backslash\{0\})$, we have
\begin{equation}\label{intro1} \frac{Q-pm-p}{p}
\left\|\frac{(a+b|x|^{\alpha})^{\frac{\beta}{p}}}{|x|^{m+1}}f\right\|_{L^{p}(\mathbb{G})}
\leq\left\|\frac{(a+b|x|^{\alpha})^{\frac{\beta}{p}}}{|x|^{m}}\mathcal{R}f\right\|_{L^{p}(\mathbb{G})} .
\end{equation}
If $Q\neq pm+p$, then the constant $\frac{Q-pm-p}{p}$ is sharp.

\item If $\alpha \beta<0$ and $pm-\alpha\beta\leq Q-p$, then for all $f\in C_{0}^{\infty}(\mathbb{G}\backslash\{0\})$, we have
\begin{equation} \label{intro2}
\frac{Q-pm+\alpha\beta-p}{p}
\left\|\frac{(a+b|x|^{\alpha})^{\frac{\beta}{p}}}{|x|^{m+1}}f\right\|_{L^{p}(\mathbb{G})}
\leq\left\|\frac{(a+b|x|^{\alpha})^{\frac{\beta}{p}}}{|x|^{m}}\mathcal{R}f\right\|_{L^{p}(\mathbb{G})}
.
\end{equation}
If $Q\neq pm+p-\alpha\beta$, then the constant $\frac{Q-pm+\alpha\beta-p}{p}$ is sharp.
\end{itemize}

As noted before, the weights in the inequalities \eqref{intro1} and \eqref{intro2} are called superweights since the constants $\frac{Q-pm-p}{p}$ in \eqref{intro1} and $\frac{Q-pm+\alpha\beta-p}{p}$ in \eqref{intro2} are sharp for arbitrary homogeneous quasi-norm $|\cdot|$ of $\mathbb{G}$ and wide range of choices of the allowed parameters  $\alpha, \beta, a, b$ and $m$. Directly from the inequalities \eqref{intro1} and \eqref{intro2}, choosing different $\alpha, \beta, a, b, m$ and $Q$, one can obtain a number of Hardy type inequalities which have various consequences
and applications. For instance, in the Abelian (isotropic or anisotropic) case ${\mathbb G}=(\mathbb R^{n},+)$, we have
$Q=n$, so for any quasi-norm $|\cdot|$ on $\mathbb R^{n}$, all $a,b>0$ and $1<p<\infty$ these imply
new inequalities. Thus, if $\alpha \beta>0$ and $pm\leq Q-p$, then for all $f\in C_{0}^{\infty}(\mathbb R^{n}\backslash\{0\})$, we have
\begin{equation}\label{intro3} \frac{n-pm-p}{p}
\left\|\frac{(a+b|x|^{\alpha})^{\frac{\beta}{p}}}{|x|^{m+1}}f\right\|_{L^{p}(\mathbb R^{n})}
\leq\left\|\frac{(a+b|x|^{\alpha})^{\frac{\beta}{p}}}{|x|^{m}}\frac{df}{d|x|}\right\|_{L^{p}(\mathbb R^{n})}
\end{equation}
with the constant being sharp for $n\neq pm+p$.

If $\alpha \beta<0$ and $pm-\alpha\beta\leq n-p$, then for all $f\in C_{0}^{\infty}(\mathbb{G}\backslash\{0\})$, we have
\begin{equation} \label{intro4}
\frac{n-pm+\alpha\beta-p}{p}
\left\|\frac{(a+b|x|^{\alpha})^{\frac{\beta}{p}}}{|x|^{m+1}}f\right\|_{L^{p}(\mathbb R^{n})}
\leq\left\|\frac{(a+b|x|^{\alpha})^{\frac{\beta}{p}}}{|x|^{m}}\frac{df}{d|x|} \right\|_{L^{p}(\mathbb R^{n})}
\end{equation}
with the sharp constant for $n\neq pm+p-\alpha\beta$.
In the case of the standard Euclidean distance $|x|=\sqrt{x^{2}_{1}+\ldots+x^{2}_{n}}$ by using the Schwartz inequality from the inequalities \eqref{intro3} and \eqref{intro4} we obtain that
if $\alpha \beta>0$ and $pm\leq Q-p$, then for all $f\in C_{0}^{\infty}(\mathbb R^{n}\backslash\{0\})$
\begin{equation}\label{intro5} \frac{n-pm-p}{p}
\left\|\frac{(a+b|x|^{\alpha})^{\frac{\beta}{p}}}{|x|^{m+1}}f\right\|_{L^{p}(\mathbb R^{n})}
\leq\left\|\frac{(a+b|x|^{\alpha})^{\frac{\beta}{p}}}{|x|^{m}}\nabla f\right\|_{L^{p}(\mathbb R^{n})}
\end{equation}
with the constant sharp for $n\neq pm+p$.

If $\alpha \beta<0$ and $pm-\alpha\beta\leq n-p$, then for all $f\in C_{0}^{\infty}(\mathbb{G}\backslash\{0\})$, we have
\begin{equation} \label{intro6}
\frac{n-pm+\alpha\beta-p}{p}
\left\|\frac{(a+b|x|^{\alpha})^{\frac{\beta}{p}}}{|x|^{m+1}}f\right\|_{L^{p}(\mathbb R^{n})}
\leq\left\|\frac{(a+b|x|^{\alpha})^{\frac{\beta}{p}}}{|x|^{m}}\nabla f\right\|_{L^{p}(\mathbb R^{n})}
\end{equation}
with the sharp constant for $n\neq pm+p-\alpha\beta$. The $L^{2}$-version, that is, when $p=2$ the inequalities \eqref{intro5} and \eqref{intro6} were obtained in \cite{GM11}. We also shall note that these inequalities have interesting applications in theory of ODE (see \cite[Theorem 2.1]{GM11}).

In Section \ref{SEC:2} we give the main result of this part and give its short proof. Some higher order versions of the obtained inequalities are discussed briefly in Section \ref{Sec3}.

In Section \ref{SEC:prelim} we briefly recall the main concepts of homogeneous groups and fix the notation. In Section \ref{SEC:critHardy} we present critical Hardy inequalities of logarithmic type and uncertainty type principles on homogeneous groups. The remainder estimates for $L^{p}$-weighted Hardy inequalities on homogeneous groups are proved in Section \ref{SEC:rem_estimates}. Moreover, in Section \ref{stab} we also investigate another improved version of $L^{p}$-weighted Hardy inequalities involving a distance. In Section \ref{SEC:crit_subcrit_con} the relation between the critical and the subcritical Hardy inequalities on homogeneous groups is investigated. In Section \ref{SEC:CKN} we introduce Caffarelli-Kohn-Nirenberg type inequalities on homogenous groups and prove their extended version.

\section{Preliminaries}
\label{SEC:prelim}

In this section we very briefly recall the necessary notation concerning the setting of homogeneous
groups following Folland and Stein \cite{FS-Hardy} as well as a recent treatise \cite{FR}.
We also recall a few other facts that will be used in the proofs.
A connected simply connected Lie group $\mathbb G$ is called a {\em homogeneous group} if
its Lie algebra $\mathfrak{g}$ is equipped with a family of the following dilations:
$$D_{\lambda}={\rm Exp}(A \,{\rm ln}\lambda)=\sum_{k=0}^{\infty}
\frac{1}{k!}({\rm ln}(\lambda) A)^{k},$$
where $A$ is a diagonalisable positive linear operator on $\mathfrak{g}$,
and every $D_{\lambda}$ is a morphism of $\mathfrak{g}$,
that is,
$$\forall X,Y\in \mathfrak{g},\, \lambda>0,\;
[D_{\lambda}X, D_{\lambda}Y]=D_{\lambda}[X,Y],$$
holds. We recall that $Q := {\rm Tr}\,A$ is called the homogeneous dimension of $\mathbb G$.

A homogeneous group is a nilpotent (Lie) group and exponential mapping $\exp_{\mathbb G}:\mathfrak g\to\mathbb G$ of this group is a global diffeomorphism.
Thus, this implies the dilation structure, and this dilation is denoted by $D_{\lambda}x$ or just by $\lambda x$, on homogeneous groups.

Then we have
\begin{equation}
|D_{\lambda}(S)|=\lambda^{Q}|S| \quad {\rm and}\quad \int_{\mathbb{G}}f(\lambda x)
dx=\lambda^{-Q}\int_{\mathbb{G}}f(x)dx.
\end{equation}
Here $dx$ is the Haar measure on homogeneous groups $\mathbb{G}$ and $|S|$ is the volume of a measurable set $S\subset \mathbb{G}$. The Haar measure on a homogeneous group $\mathbb{G}$ is the standard Lebesgue measure for $\Rn$ (see, for example \cite[Proposition 1.6.6]{FR}).

Let $|\cdot|$ be a homogeneous quasi-norm on $\mathbb G$.
Then the quasi-ball centred at $x\in\mathbb{G}$ with radius $R > 0$ is defined by
$$B(x,R):=\{y\in \mathbb{G}: |x^{-1}y|<R\}.$$
The following notation will be also used in this paper
$$B^{c}(x,R):=\{y\in \mathbb{G}: |x^{-1}y|\geq R\}.$$
We refer to \cite{FS-Hardy} for the proof of the following important polar decomposition on homogeneous Lie groups, which can be also found in \cite[Section 3.1.7]{FR}:
there is a (unique)
positive Borel measure $\sigma$ on the
unit quasi-sphere
\begin{equation}\label{EQ:sphere}
\wp:=\{x\in \mathbb{G}:\,|x|=1\},
\end{equation}
so that for every $f\in L^{1}(\mathbb{G})$ we have
\begin{equation}\label{EQ:polar}
\int_{\mathbb{G}}f(x)dx=\int_{0}^{\infty}
\int_{\wp}f(ry)r^{Q-1}d\sigma(y)dr.
\end{equation}
Let us now fix a basis $\{X_{1},\ldots,X_{n}\}$ of a Lie algebra $\mathfrak{g}$
such that
$$AX_{k}=\nu_{k}X_{k}$$
for every $k$, so that the matrix $A$ can be taken to be
$A={\rm diag} (\nu_{1},\ldots,\nu_{n}).$
Then every $X_{k}$ is homogeneous of degree $\nu_{k}$ and
$$
Q=\nu_{1}+\cdots+\nu_{n}.
$$
The decomposition of ${\exp}_{\mathbb{G}}^{-1}(x)$ in $\mathfrak g$ defines the vector
$$e(x)=(e_{1}(x),\ldots,e_{n}(x))$$
by the formula
$${\exp}_{\mathbb{G}}^{-1}(x)=e(x)\cdot \nabla\equiv\sum_{j=1}^{n}e_{j}(x)X_{j},$$
where $\nabla=(X_{1},\ldots,X_{n})$.
It implies the equality
$$x={\exp}_{\mathbb{G}}\left(e_{1}(x)X_{1}+\ldots+e_{n}(x)X_{n}\right).$$
Taking into account the homogeneity and denoting $x=ry,\,y\in \wp,$ one has
$$
e(x)=e(ry)=(r^{\nu_{1}}e_{1}(y),\ldots,r^{\nu_{n}}e_{n}(y)).
$$
So we have
\begin{equation*}
\frac{d}{d|x|}f(x)=\frac{d}{dr}f(ry)=
 \frac{d}{dr}f({\exp}_{\mathbb{G}}
\left(r^{\nu_{1}}e_{1}(y)X_{1}+\ldots
+r^{\nu_{n}}e_{n}(y)X_{n}\right)).
\end{equation*}
We use the notation
\begin{equation}\label{EQ:Euler}
\mathcal{R} :=\frac{d}{dr},
\end{equation}
that is,
\begin{equation}\label{dfdr}
	\frac{d}{d|x|}f(x)=\mathcal{R}f(x), \;\forall x\in \mathbb G,
\end{equation}
for any homogeneous quasi-norm $|x|$ on $\mathbb G$.
Let us recall the following lemma, which will be used in our proof.
\begin{lem}[\cite{FrS08}]
\label{FrS}
Let $p\geq2$ and let $a$, $b$ be real numbers. Then there exists $c_{p}>0$ such that
$$|a-b|^{p}\geq|a|^{p}-p|a|^{p-2}ab+c_{p}|b|^{p}$$
holds, where $c_{p}=\underset{0<t\leq1/2}{\rm min}((1-t)^{p}-t^{p}+pt^{p-1})$ is sharp in this inequality.
\end{lem}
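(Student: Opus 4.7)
Both sides are homogeneous of degree $p$ in $(a,b)$, so after dividing by $|b|^{p}$ (the case $b=0$ is trivial) and setting $s:=a/b$, the claim reduces to
\[
g(s)\;:=\;|s-1|^{p}-|s|^{p}+p|s|^{p-2}s\;\ge\;c_{p}\qquad\text{for every } s\in\mathbb R.
\]
For $s\in(0,1]$ one has $g(s)=(1-s)^{p}-s^{p}+ps^{p-1}$, so the constant $c_{p}$ appearing in the statement is precisely $\inf_{s\in(0,1/2]}g(s)$, and the bound is automatic on that subinterval. The plan is to verify the inequality on the remaining regions $s\in[1/2,1]$, $s\ge 1$, and $s\le 0$, and then to read off sharpness.

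For $s\ge 1$, the mean value theorem applied to $y\mapsto y^{p-1}$ on $[s-1,s]$, combined with the monotonicity of $y\mapsto y^{p-2}$ (which holds since $p\ge 2$), gives $s^{p-1}-(s-1)^{p-1}\le(p-1)s^{p-2}$, i.e.\ $g'(s)\ge 0$; combined with $g(1)=p-1\ge 1\ge c_{p}$ this yields $g(s)\ge c_{p}$ on $[1,\infty)$. For $s\le 0$, substituting $u=-s\ge 0$ gives $g(-u)=(1+u)^{p}-u^{p}-pu^{p-1}$; the tangent-line bound $(1+u)^{p-1}\ge u^{p-1}+(p-1)u^{p-2}$, which is just convexity of $y\mapsto y^{p-1}$ on $[0,\infty)$ for $p\ge 2$, translates after multiplication by $p$ into $\tfrac{d}{du}g(-u)\ge 0$; hence $g(-u)\ge g(0)=1\ge c_{p}$.

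The remaining interval $s\in[1/2,1]$ is the main obstacle. My strategy is to reduce it to the previous case by proving the reflection inequality $g(s)\ge g(1-s)$: setting $\tau:=1-s\in[0,1/2]$, this amounts to
\[
\phi(\tau)\;:=\;2\tau^{p}-2(1-\tau)^{p}+p\bigl[(1-\tau)^{p-1}-\tau^{p-1}\bigr]\;\ge\;0.
\]
One checks $\phi(0)=p-2\ge 0$ and $\phi(1/2)=0$, and a single-variable calculus analysis of $\phi'$—exploiting the relation $1-\tau\ge\tau$ on $[0,1/2]$ and elementary comparisons between consecutive powers of $\tau$ and $1-\tau$—yields $\phi\ge 0$ throughout $[0,1/2]$. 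This elementary but slightly technical monotonicity step is where the real work lies; the other cases reduce to one-line convexity arguments.

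Sharpness is then immediate: if $t^{*}\in(0,1/2]$ realises the minimum in the definition of $c_{p}$, then choosing $b=1$ and $a=t^{*}$ turns the normalised inequality $g(s)\ge c_{p}$ into an equality at $s=t^{*}$, so no strictly larger constant works.
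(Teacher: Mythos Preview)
The paper does not prove this lemma; it is quoted from Frank--Seiringer \cite{FrS08} and used as a black box in Section~\ref{SEC:rem_estimates}. So there is no argument in the paper to compare your plan against.

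On its own merits: your reduction via homogeneity to the one-variable inequality $g(s)\ge c_p$ is correct, and the cases $s\ge 1$ and $s\le 0$ are handled cleanly by the mean-value and tangent-line bounds you give. The genuine gap is on $s\in[1/2,1]$. You reduce it to the reflection inequality $\phi(\tau)\ge 0$ on $[0,1/2]$ and then assert that ``a single-variable calculus analysis of $\phi'$'' settles it---but you do not carry out that analysis, and it is not a formality: one computes $\phi'(0^+)=p(3-p)$ while $\phi'(1/2)=4p(2-p)/2^{p-1}$, so for $2<p<3$ the derivative $\phi'$ changes sign on $(0,1/2)$ and no monotonicity one-liner is available. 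You yourself flag this step as ``where the real work lies''; as written the proof is incomplete at exactly this point.

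In fact the reflection inequality is stronger than you need. A cleaner route is to show directly that $g$ is nondecreasing on $[1/2,1]$, whence $g(s)\ge g(1/2)\ge c_p$ there. For $s\in[1/2,1]$ one has $0\le 1-s\le s$, hence $(1-s)^{p-2}\le s^{p-2}$ since $p\ge 2$, and therefore
\[
\frac{g'(s)}{p}=(p-1)s^{p-2}-s^{p-1}-(1-s)^{p-1}\ge (p-1)s^{p-2}-s\cdot s^{p-2}-(1-s)\,s^{p-2}=(p-2)\,s^{p-2}\ge 0.
\]
This single estimate replaces the unproven reflection step and closes the argument.
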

We will also use the following result (see \cite{ORS16} and \cite{Ruzhansky-Suragan:L2-CKN}) with anisotropic Caffarelli-Kohn-Nirenberg inequality:

\begin{thm}[\cite{ORS16}]
\label{CKN}
Let $\mathbb{G}$ be a homogeneous group
of homogeneous dimension $Q$. Let $|\cdot|$ be a homogeneous quasi-norm. Let $a,b\in\mathbb{R}$, and $f\in C_{0}^{\infty}(\mathbb{G}\backslash\{0\})$. Then we have
\begin{equation}\label{CKN1}
\left|\frac{Q-(a+b+1)}{p}\right|\int_{\mathbb{G}}\frac{|f|^{p}}{|x|^{a+b+1}}dx\leq\left(\int_{\mathbb{G}}
\frac{|\mathcal{R}f|^{p}}{|x|^{ap}}dx\right)^{\frac{1}{p}}\left(\int_{\mathbb{G}}
\frac{|f|^{p}}{|x|^{\frac{bp}{p-1}}}dx\right)^{\frac{p-1}{p}},
\end{equation}
where $\mathcal{R}$ is defined in \eqref{EQ:Euler}, $1<p<\infty$, and the constant $\left|\frac{Q-(a+b+1)}{p}\right|$ is sharp.
\end{thm}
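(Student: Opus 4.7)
The plan is to reduce the inequality to a one-dimensional identity along radii by invoking the polar decomposition \eqref{EQ:polar}, then applying integration by parts and Hölder's inequality. Set $\alpha:=a+b+1$; the case $Q=\alpha$ makes the claimed constant zero, so assume $Q\neq\alpha$. By \eqref{EQ:polar} the left-hand integral equals
$$\int_{\wp}\int_{0}^{\infty}|f(ry)|^{p}\,r^{Q-1-\alpha}\,dr\,d\sigma(y),$$
and the radial weight is $\frac{d}{dr}\bigl(r^{Q-\alpha}/(Q-\alpha)\bigr)$. Since $f\in C_{0}^{\infty}(\mathbb{G}\setminus\{0\})$, integration by parts in $r$ has no boundary contribution, and the pointwise estimate $\bigl|\frac{d}{dr}|f(ry)|^{p}\bigr|\leq p|f(ry)|^{p-1}|\mathcal{R}f(ry)|$ (valid for complex $f$; here $\mathcal{R}=d/dr$ as in \eqref{EQ:Euler}) yields
$$|Q-\alpha|\int_{0}^{\infty}|f(ry)|^{p}\,r^{Q-1-\alpha}\,dr\leq p\int_{0}^{\infty}|f(ry)|^{p-1}|\mathcal{R}f(ry)|\,r^{Q-\alpha}\,dr.$$

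I would then integrate this over $y\in\wp$ against $d\sigma$, and apply Hölder's inequality to the right-hand side in the product measure $r^{Q-1}\,dr\,d\sigma(y)$ with conjugate exponents $p$ and $p/(p-1)$, writing the integrand as $(|\mathcal{R}f|\,r^{-a})\cdot(|f|^{p-1}\,r^{1+a-\alpha})$. The $L^{p}$-factor then reproduces exactly the first norm on the right-hand side of \eqref{CKN1}, while a short arithmetic check using $\alpha=a+b+1$ verifies that the $L^{p/(p-1)}$-factor carries the radial weight $r^{-bp/(p-1)}$ and hence equals the second factor of \eqref{CKN1} to the power $p-1$. Dividing through by $p$ yields \eqref{CKN1} with the claimed constant.

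For sharpness of $|Q-(a+b+1)|/p$, I would test the inequality on a smooth compactly supported approximation of the radial profile $f_{*}(x)=|x|^{-(Q-\alpha)/p}$, which is singled out by the fact that all three integrals in \eqref{CKN1} have the same homogeneity degree under the dilations $D_{\lambda}$. The main technical point is the truncation-and-mollification step: one cuts off $f_{*}$ at $|x|=\varepsilon$ and $|x|=\varepsilon^{-1}$, regularises by a dilated bump, and checks that the ratio of the two sides of \eqref{CKN1} tends to $1$ while the boundary contributions from truncation remain negligible. Beyond this standard but delicate step, the argument is a clean polar-coordinates-plus-Hölder computation in which the radial derivative $\mathcal{R}$ plays the role that the full gradient plays in the Euclidean setting.
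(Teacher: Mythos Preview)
The paper does not itself prove this theorem; it is quoted from \cite{ORS16} and used as a tool in Section~\ref{SEC:rem_estimates}. Your derivation of the inequality \eqref{CKN1} is correct and is exactly the argument one expects (and the same mechanism the paper uses for the related Theorem~\ref{L_p_weighted_th}): polar decomposition, integration by parts in $r$, then H\"older with exponents $p$ and $p/(p-1)$ applied to the splitting $(|\mathcal{R}f|\,r^{-a})\cdot(|f|^{p-1}\,r^{-b})$.

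The sharpness sketch, however, has a genuine gap. Your candidate extremiser $f_{*}(x)=|x|^{-(Q-\alpha)/p}$ only saturates the constant in the special case $b=(p-1)(a+1)$, which is precisely when \eqref{CKN1} collapses to the $L^{p}$-weighted Hardy inequality. The reason is that the H\"older step in your proof is an equality only when $|\mathcal{R}f|^{p}|x|^{-ap}$ is proportional to $|f|^{p}|x|^{-bp/(p-1)}$; for a pure power $f=r^{C}$ this forces $-p(1+a)=-bp/(p-1)$, i.e.\ $b=(p-1)(a+1)$, \emph{independently of} $C$. A direct check confirms the failure: with $b\neq(p-1)(a+1)$ and $f_{*}$ truncated to the annulus $\{R_{1}\leq|x|\leq R_{2}\}$, one finds $I_{0}:=\int|f_{*}|^{p}|x|^{-\alpha}\sim\log(R_{2}/R_{1})$ while $I_{1}^{1/p}I_{2}^{(p-1)/p}$ grows like a positive power of $R_{2}/R_{1}$, so the ratio of the two sides of \eqref{CKN1} tends to $0$, not $1$. (Your remark that ``all three integrals have the same homogeneity under $D_{\lambda}$'' is a statement about scale-invariance of the inequality, not a selection principle for $f_{*}$.)

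The fix is to read off the correct formal extremiser from the H\"older equality condition $\mathcal{R}f/f=\mathrm{const}\cdot r^{\,a-b/(p-1)}$, which integrates to $f(r)=\exp(Kr^{\lambda})$ with $\lambda=a+1-b/(p-1)$, the sign of $K$ being chosen so that the integration-by-parts step also incurs no loss. One then carries out the truncation-and-mollification argument on this exponential profile rather than on a power; only when $\lambda=0$ does the extremiser degenerate to a pure power. Your overall strategy (approximate the function that makes both inequalities in the proof equalities) is the right one---only the identification of that function needs correcting.
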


\section{On remainder estimates of anisotropic $L^{p}$-weighted Hardy inequalities}
\label{SEC:rem_estimates}

In this section we obtain a family of remainder estimates in the weighted $L^p$-Hardy inequalities, with a freedom of choosing the parameter $b\in\mathbb R$. The obtained remainder estimates are new already in the standard setting of $\mathbb R^n$.

\begin{thm}\label{aremterm}
Let $\mathbb{G}$ be a homogeneous group
of homogeneous dimension $Q\geq3$. Let $|\cdot|$ be a homogeneous quasi-norm. Let $2\leq p<Q$, $-\infty<\alpha<\frac{Q-p}{p}$ and $\delta_{1}=Q-p-\alpha p-\frac{Q+pb}{p}$, $\delta_{2}=Q-p-\alpha p-\frac{bp}{p-1}$ for any $b\in\mathbb{R}$. Then
for all functions $f\in C_{0}^{\infty}(\mathbb{G}\backslash\{0\})$ we have
$$\int_{\mathbb{G}}\frac{|\mathcal{R}f(x)|^{p}}{|x|^{\alpha p}}dx-\left(\frac{Q-p-\alpha p}{p}\right)^{p}\int_{\mathbb{G}}\frac{|f(x)|^{p}}{|x|^{(\alpha+1)p}}dx$$
\begin{equation}\label{aremterm1}
\geq C_{p} \frac{\left(\int_{\mathbb{G}}|f(x)|^{p}|x|^{\delta_{1}}dx\right)^{p}}
{\left(\int_{\mathbb{G}}|f(x)|^{p}|x|^{\delta_{2}}dx\right)^{p-1}},
\end{equation}
where $\mathcal{R}$ is defined in \eqref{EQ:Euler}, $C_{p}=c_{p}
\left|\frac{Q(p-1)-pb}{p^{2}}\right|^{p}$ and $c_{p}=\underset{0<t\leq1/2}{\rm min}((1-t)^{p}-t^{p}+pt^{p-1})$.
\end{thm}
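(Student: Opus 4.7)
The plan is to combine the Frank--Seiringer inequality (Lemma~\ref{FrS}) with a single integration by parts, and then invoke the weighted Caffarelli--Kohn--Nirenberg inequality of Theorem~\ref{CKN} applied to the auxiliary function $u := |x|^{\beta}f$, where $\beta := (Q-p-\alpha p)/p > 0$. The key observation is that $\mathcal{R}u/|x|^{\beta} = \beta f/|x| + \mathcal{R}f$, so choosing the real numbers $A := -\beta f/|x|$ and $B := -\beta f/|x| - \mathcal{R}f$ pointwise gives $A - B = \mathcal{R}f$ and $|B| = |\mathcal{R}u|/|x|^{\beta}$. Lemma~\ref{FrS} then yields the pointwise bound
\begin{equation*}
|\mathcal{R}f|^{p} \geq -(p-1)\beta^{p}\frac{|f|^{p}}{|x|^{p}} - p\beta^{p-1}\frac{|f|^{p-2}f\,\mathcal{R}f}{|x|^{p-1}} + c_{p}\frac{|\mathcal{R}u|^{p}}{|x|^{\beta p}}.
\end{equation*}

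Multiplying by $|x|^{-\alpha p}$ and integrating over $\mathbb{G}$, the cross-term is handled by integration by parts in the radial variable via the polar decomposition \eqref{EQ:polar}: since $f$ is compactly supported in $\mathbb{G}\setminus\{0\}$ and $\beta \neq 0$, one obtains the identity
\begin{equation*}
\int_{\mathbb{G}}\frac{|f|^{p-2}f\,\mathcal{R}f}{|x|^{\alpha p + p - 1}}\,dx = -\beta\int_{\mathbb{G}}\frac{|f|^{p}}{|x|^{(\alpha+1)p}}\,dx.
\end{equation*}
Substituting this back, the two contributions to $\int_{\mathbb{G}}|f|^{p}|x|^{-(\alpha+1)p}\,dx$ combine as $-(p-1)\beta^{p} + p\beta^{p} = \beta^{p}$, and the identity $\beta p + \alpha p = Q - p$ reorganises the inequality into
\begin{equation*}
\int_{\mathbb{G}}\frac{|\mathcal{R}f|^{p}}{|x|^{\alpha p}}\,dx - \beta^{p}\int_{\mathbb{G}}\frac{|f|^{p}}{|x|^{(\alpha+1)p}}\,dx \geq c_{p}\int_{\mathbb{G}}\frac{|\mathcal{R}u|^{p}}{|x|^{Q-p}}\,dx.
\end{equation*}

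The last step applies Theorem~\ref{CKN} to $u$ with the parameter choice $a = (Q-p)/p$, which matches the weighted gradient term $\int_{\mathbb{G}}|\mathcal{R}u|^{p}|x|^{-(Q-p)}\,dx$ on its right-hand side exactly, and with the free parameter $b$ as in the hypothesis. Raising the CKN inequality to the $p$-th power and rearranging yields a reverse-H\"older type lower bound for $\int_{\mathbb{G}}|\mathcal{R}u|^{p}/|x|^{Q-p}\,dx$ with constant $|Q(p-1)-pb|^{p}/p^{2p}$. Substituting $|u|^{p} = |x|^{\beta p}|f|^{p}$ and computing, one checks that $\beta p - (Q/p + b) = \delta_{1}$ and $\beta p - bp/(p-1) = \delta_{2}$, producing the right-hand side of \eqref{aremterm1} with constant $C_{p} = c_{p}|Q(p-1)-pb|^{p}/p^{2p}$. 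The only delicate point is the sign-bookkeeping in the Frank--Seiringer step, ensuring that the integrated cross-term cancels the $-(p-1)\beta^{p}$ overshoot precisely so as to restore the sharp Hardy constant $\beta^{p}$; the remainder of the argument is then a purely algebraic identification of exponents.
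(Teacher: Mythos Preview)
Your proof is correct and follows essentially the same strategy as the paper: the substitution $u=|x|^{\beta}f$ (the paper's $g$), the pointwise Frank--Seiringer inequality of Lemma~\ref{FrS}, radial integration by parts to eliminate the cross-term, and then Theorem~\ref{CKN} with $a=(Q-p)/p$ to convert the ground-state remainder into the stated quotient. The only difference is organisational: the paper first treats radial $f$ and afterwards reduces the general case via the $L^{p}$-spherical average $U(r)=\bigl(|\sigma|^{-1}\int_{\wp}|f(ry)|^{p}\,d\sigma(y)\bigr)^{1/p}$, whereas your pointwise application of Lemma~\ref{FrS} handles arbitrary (real-valued) $f$ in one pass, which is a mild streamlining of the same argument.
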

\begin{rem}\label{aremterm_rem1} Since the inequality \eqref{aremterm1} holds for any $b\in\mathbb{R}$, choosing $b=\frac{Q(p-1)}{p}$ so that $C_{p}=0$, we obtain the $L^{p}$-weighted Hardy inequalities on homogeneous groups:
\begin{multline}\label{aremterm13}
\int_{\mathbb{G}}\frac{|\mathcal{R}f(x)|^{p}}{|x|^{\alpha p}}dx\geq\left(\frac{Q-p-\alpha p}{p}\right)^{p}\int_{\mathbb{G}}\frac{|f(x)|^{p}}{|x|^{(\alpha+1)p}}dx, \\
-\infty<\alpha<\frac{Q-p}{p},\;\; 2\leq p<Q,
\end{multline}
for all functions $f\in C_{0}^{\infty}(\mathbb{G}\backslash\{0\})$.
In the abelian case $\mathbb{G}=(\Rn,+)$ with $Q=n$, the inequality \eqref{aremterm13} gives the $L^{p}$-weighted Hardy inequalities for any quasi-norm on $\Rn$: For any function $f\in C_{0}^{\infty}(\Rn\backslash\{0\})$ we have
$$\int_{\Rn}\left|\frac{x}{|x|}\cdot\nabla f(x)\right|^{p}|x|^{-\alpha p}dx\geq\left(\frac{n-p-\alpha p}{p}\right)^{p}\int_{\Rn}\frac{|f(x)|^{p}}{|x|^{p(\alpha+1)}}dx,$$
where $-\infty<\alpha<\frac{n-p}{p}$ and $2\leq p<n$. By Schwarz's inequality with the standard Euclidean distance $|x|=\sqrt{x_{1}^{2}+x_{2}^{2}+...+x_{n}^{2}}$, we obtain the Euclidean form of the $L^{p}$-weighted Hardy inequalities on $\Rn$:
\begin{multline*}
\int_{\Rn}\frac{|\nabla f(x)|^{p}}{|x|^{\alpha p}}dx\geq\left(\frac{n-p-\alpha p}{p}\right)^{p}\int_{\Rn}\frac{|f(x)|^{p}}{|x|^{(\alpha+1)p}}dx, \\ -\infty<\alpha<\frac{n-p}{p},\;\; 2\leq p<n,
\end{multline*}
for any function $f\in C_{0}^{\infty}(\Rn\backslash\{0\})$, where $\nabla$ is the standard gradient in $\Rn$.
\end{rem}
\begin{rem}\label{aremterm_rem2}
We also note that in the abelian case, \eqref{aremterm1} implies a new remainder estimate for any quasi-norm on $\Rn$: For any function $f\in C_{0}^{\infty}(\Rn\backslash\{0\})$ and for any $b\in\mathbb{R}$, we obtain
$$\int_{\Rn}\left|\frac{x}{|x|}\cdot\nabla f(x)\right|^{p}|x|^{-\alpha p}dx-\left(\frac{n-p-\alpha p}{p}\right)^{p}\int_{\Rn}\frac{|f(x)|^{p}}{|x|^{p(\alpha+1)}}dx$$
\begin{equation}\label{aremterm11}
\geq C_{p} \frac{\left(\int_{\Rn}|f(x)|^{p}|x|^{\delta_{1}}dx\right)^{p}}
{\left(\int_{\Rn}|f(x)|^{p}|x|^{\delta_{2}}dx\right)^{p-1}},\quad 2\leq p<n,\;-\infty<\alpha<\frac{n-p}{p}.
\end{equation}
As in Remark \ref{aremterm_rem1}, by Schwarz's inequality with the standard Euclidean distance, we obtain the Euclidean version of the remainder estimate for $L^{p}$-weighted Hardy inequalities:
$$\int_{\Rn}\frac{\left|\nabla f(x)\right|^{p}}{|x|^{\alpha p}}dx-\left(\frac{n-p-\alpha p}{p}\right)^{p}\int_{\Rn}\frac{|f(x)|^{p}}{|x|^{(\alpha+1)p}}dx$$
\begin{equation}\label{aremterm12}
\geq C_{p} \frac{\left(\int_{\Rn}|f(x)|^{p}|x|^{\delta_{1}}dx\right)^{p}}
{\left(\int_{\Rn}|f(x)|^{p}|x|^{\delta_{2}}dx\right)^{p-1}},\quad 2\leq p<n,\;-\infty<\alpha<\frac{n-p}{p},
\end{equation}
for every function $f\in C_{0}^{\infty}(\Rn\backslash\{0\})$ and for any $b\in\mathbb{R}$, where $\nabla$ is the standard gradient in $\Rn$.

Thus, we note that the remainder estimate \eqref{aremterm12} is new already in the standard setting of $\Rn$.
\end{rem}
\begin{proof}[Proof of Theorem \ref{aremterm}] First let us show the statement of Theorem \ref{aremterm} for a radial function $f\in C_{0}^{\infty}(\mathbb{G}\backslash\{0\})$. Since $f$ is radial, $f$ can be represented as $f(x)=\widetilde{f}(|x|)$. By Brezis-V\'{a}zquez's idea (\cite{BV97}), we define
\begin{equation}\label{aremterm2}
\widetilde{g}(r)=r^{\frac{Q-p-\alpha p}{p}}\widetilde{f}(r).
\end{equation}
Since $\widetilde{f}=\widetilde{f}(r)\in C_{0}^{\infty}(0,\infty)$ and $\alpha<\frac{Q-p}{p}$, we obtain $\widetilde{g}(0)=0$ and $\widetilde{g}(+\infty)=0$. We set $g(x)=\widetilde{g}(|x|)$ for $x\in \mathbb{G}$.
Introducing polar coordinates $(r,y)=(|x|, \frac{x}{\mid x\mid})\in (0,\infty)\times\wp$ on $\mathbb{G}$ and using \eqref{EQ:polar}, we have
$$J:=\int_{\mathbb{G}}|\mathcal{R}f|^{p}|x|^{-\alpha p}dx-\left(\frac{Q-p-\alpha p}{p}\right)^{p}\int_{\mathbb{G}}\frac{|f|^{p}}{|x|^{p(\alpha+1)}}dx$$
$$=|\sigma|\int_{0}^{\infty}\left|\frac{d}{dr}\widetilde{f}(r)\right|^{p}r^{-\alpha p+Q-1}dr-|\sigma|
\left(\frac{Q-p-\alpha p}{p}\right)^{p}\int_{0}^{\infty}|\widetilde{f}(r)|^{p}r^{-p(\alpha+1)+Q-1}dr$$
$$=|\sigma|\int_{0}^{\infty}\left|\left(\frac{Q-p-\alpha p}{p}\right)r^{-\frac{Q-\alpha p}{p}}\widetilde{g}(r)
-r^{-\frac{Q-p-\alpha p}{p}}\frac{d}{dr}\widetilde{g}(r)\right|^{p}r^{Q-1-\alpha p}dr$$
$$-|\sigma|\left(\frac{Q-p-\alpha p}{p}\right)^{p}\int_{0}^{\infty}|\widetilde{g}(r)|^{p}r^{-1}dr,$$
where $|\sigma|$ is the $Q-1$ dimensional surface measure of the unit quasi-sphere.
Here applying Lemma \ref{FrS} to the integrand of the first term in the last expression above, we get
$$\left|\left(\frac{Q-p-\alpha p}{p}\right)r^{-\frac{Q-\alpha p}{p}}\widetilde{g}(r)-r^{-\frac{Q-p-\alpha p}{p}}\frac{d}{dr}\widetilde{g}(r)\right|^{p}r^{Q-1-\alpha p}$$
$$\geq\left(\left(\frac{Q-p-\alpha p}{p}\right)^{p}r^{-Q+\alpha p}|\widetilde{g}(r)|^{p}\right)r^{Q-1-\alpha p}$$
$$-p\left(\frac{Q-p-\alpha p}{p}\right)^{p-1}|\widetilde{g}(r)|^{p-2}\widetilde{g}(r)\frac{d}{dr}\widetilde{g}(r)
r^{-(\frac{Q-\alpha p}{p})(p-1)}r^{-(\frac{Q-p-\alpha p}{p})}r^{Q-1-\alpha p}$$
$$+c_{p}\left|\frac{d}{dr}\widetilde{g}(r)\right|^{p}r^{-Q+p+\alpha p}r^{Q-1-\alpha p}$$
$$=\left(\frac{Q-p-\alpha p}{p}\right)^{p}r^{-1}|\widetilde{g}(r)|^{p}-p\left(\frac{Q-p-\alpha p}{p}\right)^{p-1}|\widetilde{g}(r)|^{p-2}\widetilde{g}(r)\frac{d}{dr}\widetilde{g}(r)$$
$$+c_{p}\left|\frac{d}{dr}\widetilde{g}(r)\right|^{p}r^{p-1}.$$
Since $\widetilde{g}(0)=\widetilde{g}(+\infty)=0$ and $p\geq2$, we note that
$$p\int_{0}^{\infty}|\widetilde{g}(r)|^{p-2}\widetilde{g}(r)\frac{d}{dr}\widetilde{g}(r)dr=\int_{0}^{\infty}
\frac{d}{dr}(|\widetilde{g}(r)|^{p})dr=0.$$
This gives a \enquote{ground state representation} (\cite{FrS08}) of the Hardy difference $J$:
\begin{equation}\label{aremterm3}
J\geq c_{p} |\sigma|\int_{0}^{\infty}\left|\frac{d}{dr}\widetilde{g}(r)\right|^{p}r^{p-1}dr=
c_{p}\int_{\mathbb{G}}|\mathcal{R}g(x)|^{p}|x|^{p-Q}dx.
\end{equation}
Putting $a=\frac{Q-p}{p}$ in \eqref{CKN1}, we obtain for any $b\in\mathbb{R}$, that
\begin{multline}\label{CKN2}
\left|\frac{Q(p-1)-pb}{p^{2}}\right|\int_{\mathbb{G}}|g|^{p}
|x|^{-\frac{Q+pb}{p}}dx \\
\leq\left(\int_{\mathbb{G}}
|\mathcal{R}g|^{p}|x|^{p-Q}dx\right)^{\frac{1}{p}}\left(\int_{\mathbb{G}}
|g|^{p}|x|^{-\frac{bp}{p-1}}dx\right)^{\frac{p-1}{p}}.
\end{multline}
It gives that
\begin{equation}\label{aremterm4}J\geq c_{p}\int_{\mathbb{G}}|\mathcal{R}g(x)|^{p}|x|^{p-Q}dx\geq c_{p}
\left|\frac{Q(p-1)-pb}{p^{2}}\right|^{p}\frac{\left(\int_{\mathbb{G}}|g|^{p}|x|^{-\frac{Q+pb}{p}}
dx\right)^{p}}{\left(\int_{\mathbb{G}}
|g|^{p}|x|^{-\frac{bp}{p-1}}dx\right)^{p-1}}.\end{equation}
Taking into account that $g(x)=\widetilde{g}(|x|)$, $x\in\mathbb{G}$, and \eqref{aremterm2}, one calculates
$$\int_{\mathbb{G}}|x|^{-\frac{Q+pb}{p}}|g(x)|^{p}dx=|\sigma|\int_{0}^{\infty}r^{Q-p-\alpha p}|\widetilde{f}(r)|^{p}r^{-\frac{Q+pb}{p}}r^{Q-1}dr
$$
$$=\int_{\mathbb{G}}|f(x)|^{p}|x|^{Q-p-\alpha p-\frac{Q+pb}{p}}dx=
\int_{\mathbb{G}}|f(x)|^{p}|x|^{\delta_{1}}dx.$$
On the other hand,
$$\int_{\mathbb{G}}|x|^{-\frac{bp}{p-1}}|g(x)|^{p}dx=|\sigma|\int_{0}^{\infty}r^{Q-p-\alpha p}|\widetilde{f}(r)|^{p}r^{-\frac{bp}{p-1}}r^{Q-1}dr$$
$$=\int_{\mathbb{G}}|f(x)|^{p}|x|^{Q-p-\alpha p-\frac{bp}{p-1}}dx=\int_{\mathbb{G}}|f(x)|^{p}|x|^{\delta_{2}}dx.$$
Putting these into \eqref{aremterm4}, we obtain
\begin{equation}\label{aremterm4_01}J\geq c_{p}
\left|\frac{Q(p-1)-pb}{p^{2}}\right|^{p}
\frac{\left(\int_{\mathbb{G}}|f(x)|^{p}|x|^{\delta_{1}}dx\right)^{p}}
{\left(\int_{\mathbb{G}}|f(x)|^{p}|x|^{\delta_{2}}dx\right)^{p-1}}.
\end{equation}
Now let us prove it for non-radial functions. We consider the radial function for a non-radial function $f$:
\begin{equation}\label{aremterm4_1}U(r)=\left(\frac{1}{|\sigma|}\int_{\wp}|f(ry)|^{p}d\sigma(y)\right)^{\frac{1}{p}}.\end{equation}
Using the H\"{o}lder inequality, we calculate
$$\frac{d}{dr}U(r)=\frac{1}{p}\left(\frac{1}{|\sigma|}\int_{\wp}|f(ry)|^{p}d\sigma(y)\right)^{\frac{1}{p}-1}
\frac{1}{|\sigma|}\int_{\wp}p|f(ry)|^{p-2}f(ry)\overline{\frac{d}{dr}f(ry)}d\sigma(y)$$
$$\leq\left(\frac{1}{|\sigma|}\int_{\wp}|f(ry)|^{p}d\sigma(y)\right)^{\frac{1}{p}-1}
\frac{1}{|\sigma|}\int_{\wp}|f(ry)|^{p-1}\left|\frac{d}{dr}f(ry)\right|d\sigma(y)$$
$$\leq \left(\frac{1}{|\sigma|}\int_{\wp}|f(ry)|^{p}d\sigma(y)\right)^{\frac{1}{p}-1}
\frac{1}{|\sigma|}\left(\int_{\wp}\left|\frac{d}{dr}f(ry)\right|^{p}d\sigma(y)\right)^{\frac{1}{p}}
\left(\int_{\wp}|f(ry)|^{p}d\sigma(y)\right)^{\frac{p-1}{p}}$$
$$=\left(\frac{1}{|\sigma|}\int_{\wp}\left|\frac{d}{dr}f(ry)\right|^{p}d\sigma(y)\right)^{\frac{1}{p}}.$$
Thus, we have
$$
\frac{d}{dr}U(r)\leq\left(\frac{1}{|\sigma|}\int_{\wp}\left|\frac{d}{dr}f(ry)\right|^{p}d\sigma(y)\right)^{\frac{1}{p}}.
$$
It follows that
$$|\sigma|\int_{0}^{\infty}\left|\frac{d}{dr}U(r)\right|^{p}r^{Q-1-\alpha p}dr\leq
|\sigma|\int_{0}^{\infty}\frac{1}{|\sigma|}\int_{\wp}\left|\frac{d}{dr}f(ry)\right|^{p}r^{Q-1-\alpha p}d\sigma(y)dr$$
$$=\int_{\mathbb{G}}\left|\mathcal{R}f\right|^{p}|x|^{-\alpha p}dx,$$
that is,
\begin{equation}\label{aremterm5}
\int_{\mathbb{G}}\left|\mathcal{R}U\right|^{p}|x|^{-\alpha p}dx\leq\int_{\mathbb{G}}\left|\mathcal{R}f\right|^{p}|x|^{-\alpha p}dx.
\end{equation}
In view of \eqref{aremterm4_1}, we obtain
$$\int_{\mathbb{G}}|U(|x|)|^{p}|x|^{\theta}dx=|\sigma|\int_{0}^{\infty}|U(r)|^{p}r^{\theta+Q-1}dr$$
\begin{equation}\label{aremterm6}=|\sigma|\int_{0}^{\infty}\frac{1}{|\sigma|}\int_{\wp}|f(ry)|^{p}d\sigma(y)r^{\theta+Q-1}dr=
\int_{\mathbb{G}}|f(x)|^{p}|x|^{\theta}dx \end{equation}
for any $\theta\in\mathbb{R}$.
Then, it is easy to see that \eqref{aremterm5} and \eqref{aremterm6} imply that \eqref{aremterm1} holds also for all non-radial functions.
\end{proof}

\section{Stability of anisotropic $L^{p}$-weighted Hardy inequalities}
\label{stab}
In this section we establish a remainder estimate in the $L^{p}$-weighted Hardy inequality involving the distance to the set of extremisers: estimates of such type are known as stability estimates in the literature.
Let us denote
\begin{equation}\label{aremterm7}
f_{\alpha}(x)=|x|^{-\frac{Q-p-\alpha p}{p}}
\end{equation}
for $-\infty<\alpha<\frac{Q-p}{p}$, and we set
\begin{equation}\label{aremterm8}
d_{R}(f,g):=\left(\int_{\mathbb{G}}\frac{|f(x)-g(x)|^{p}}{\left|\log\frac{R}{|x|}\right|^{p}|x|^{(\alpha+1)p}}dx\right)^{\frac{1}{p}}
\end{equation}
for functions $f$ and $g$ for which the integral in \eqref{aremterm8} is finite.
\begin{thm}\label{aremterm_thm}
Let $\mathbb{G}$ be a homogeneous group
of homogeneous dimension $Q\geq3$. Let $|\cdot|$ be a homogeneous quasi-norm. Let $2\leq p<Q$ and $-\infty<\alpha<\frac{Q-p}{p}$. Then for all radial functions $f\in C_{0}^{\infty}(\mathbb{G}\backslash\{0\})$ we have
$$\int_{\mathbb{G}}\frac{|\mathcal{R}f|^{p}}{|x|^{\alpha p}}dx-\left(\frac{Q-p-\alpha p}{p}\right)^{p}\int_{\mathbb{G}}\frac{|f|^{p}}{|x|^{(\alpha+1)p}}dx$$
\begin{equation}\label{aremterm9}
\geq c_{p}\left(\frac{p-1}{p}\right)^{p}\sup_{R>0}d_{R}(f,c_{f}(R)f_{\alpha})^{p},
\end{equation}
where $c_{f}(R)=R^{\frac{Q-p-\alpha p}{p}}\widetilde{f}(R)$ with $f(x)=\widetilde{f}(r)$, $|x|=r$, $\mathcal{R}:=\frac{d}{d|x|}$ is the radial derivative, $c_{p}$ is defined in Lemma \ref{FrS}, $f_{\alpha}$ and $d_{R}(\cdot,\cdot)$ are defined in \eqref{aremterm7} and \eqref{aremterm8}, respectively.
\end{thm}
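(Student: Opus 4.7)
The plan is to recycle the ground state representation already derived in the proof of Theorem~\ref{aremterm} and then to trade the resulting integral $\int_0^\infty|\widetilde g'|^p r^{p-1}\,dr$ for the distance $d_R$ by means of a one-dimensional weighted Hardy inequality in which $\log(R/r)$ plays the role of the Hardy weight. Throughout, I write $f(x)=\widetilde f(|x|)$ and, exactly as in the proof of Theorem~\ref{aremterm}, set $\widetilde g(r):=r^{(Q-p-\alpha p)/p}\widetilde f(r)$; note that then $c_f(R)=\widetilde g(R)$ and $f(x)-c_f(R)f_\alpha(x)=|x|^{-(Q-p-\alpha p)/p}\bigl(\widetilde g(|x|)-\widetilde g(R)\bigr)$. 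The ground state step \eqref{aremterm3}, which is valid for every radial $f\in C_0^\infty(\mathbb G\setminus\{0\})$, already yields
\[
J\;:=\;\int_{\mathbb G}\frac{|\mathcal R f|^p}{|x|^{\alpha p}}\,dx-\left(\frac{Q-p-\alpha p}{p}\right)^p\!\int_{\mathbb G}\frac{|f|^p}{|x|^{(\alpha+1)p}}\,dx\;\ge\;c_p|\sigma|\int_0^\infty|\widetilde g'(r)|^p r^{p-1}\,dr.
\]

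Next, passing to polar coordinates \eqref{EQ:polar} in the definition \eqref{aremterm8} and inserting the explicit expression for $f-c_f(R)f_\alpha$ above, the weighted powers of $r$ collapse cleanly (the product of $r^{-(Q-p-\alpha p)}$, $r^{-(\alpha+1)p}$, and $r^{Q-1}$ is $r^{-1}$) to give the exact identity
\[
d_R\bigl(f,c_f(R)f_\alpha\bigr)^p=|\sigma|\int_0^\infty\frac{|\widetilde g(r)-\widetilde g(R)|^p}{|\log(R/r)|^p}\,\frac{dr}{r}.
\]
It therefore suffices to prove the one-dimensional weighted Hardy inequality
\[
\int_0^\infty|\phi'(r)|^p r^{p-1}\,dr\;\ge\;\left(\frac{p-1}{p}\right)^p\int_0^\infty\frac{|\phi(r)|^p}{|\log(R/r)|^p}\,\frac{dr}{r}\qquad(\phi(R)=0),
\]
and to apply it with $\phi:=\widetilde g-\widetilde g(R)$, which vanishes at $R$ and satisfies $\phi'=\widetilde g'$. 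This inequality reduces to the classical sharp one-dimensional Hardy inequality $\int_0^\infty|\psi'(t)|^p\,dt\ge\bigl(\tfrac{p-1}{p}\bigr)^p\int_0^\infty|\psi(t)|^p/t^p\,dt$ with $\psi(0)=0$: splitting the range at $r=R$ and performing the substitutions $t=\log(R/r)$ on $(0,R)$ and $t=\log(r/R)$ on $(R,\infty)$, with $\psi(t):=\phi(r)$, one checks that both the kinetic integral and the potential integral are preserved exactly, and $\psi(0)=\phi(R)=0$ is precisely the required boundary condition.

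Chaining the ground state bound, the above polar identity, and the logarithmic Hardy inequality then gives $J\ge c_p\bigl(\tfrac{p-1}{p}\bigr)^p d_R(f,c_f(R)f_\alpha)^p$; since the left-hand side is independent of $R$, taking $\sup_{R>0}$ produces \eqref{aremterm9}. The main obstacle is the one-dimensional logarithmic Hardy inequality itself: although the change of variables is conceptually transparent, $\phi=\widetilde g-\widetilde g(R)$ is only eventually equal to the constant $-\widetilde g(R)$ rather than compactly supported in $(0,\infty)$, so the corresponding $\psi$ is only bounded at $t=\infty$; boundedness of $\psi$ combined with $\psi(0)=0$ and the assumption $p\ge 2$ nonetheless suffice to make the boundary terms in the standard integration-by-parts derivation of the classical 1D Hardy inequality vanish, so no genuine difficulty remains.
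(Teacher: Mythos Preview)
Your proposal is correct and follows essentially the same route as the paper: the ground state representation \eqref{aremterm3} followed by the logarithmic Hardy inequality applied to $g$, then rewriting in terms of $d_R$. The only cosmetic difference is that the paper invokes its own critical Hardy inequality on $\mathbb{G}$ (Theorem~\ref{ScalHardy} with $\gamma=p$, i.e.\ \eqref{ScalHardy4}) for the second step, whereas you reduce the radial case to the classical one-dimensional Hardy inequality via the substitution $t=|\log(R/r)|$; for radial functions these are the same inequality.
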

\begin{proof}[Proof of Theorem \ref{aremterm_thm}] Since $p\geq2$, as in  \eqref{aremterm3} in the proof of Theorem \ref{aremterm}, we have
$$J(f)=\int_{\mathbb{G}}|\mathcal{R}f|^{p}|x|^{-\alpha p}dx-\left(\frac{Q-p-\alpha p}{p}\right)^{p}\int_{\mathbb{G}}\frac{|f|^{p}}{|x|^{(\alpha+1)p}}dx$$
\begin{equation}\label{aremterm10}
\geq c_{p}|\sigma|\int_{0}^{\infty}\left|\frac{d}{dr}\widetilde{g}\right|^{p}r^{p-1}dr=
c_{p}\int_{\mathbb{G}}\left|\frac{d}{dr}g\right|^{p}|x|^{p-Q}dx.
\end{equation}
By Theorem 3.1 in \cite{Ruzhansky-Suragan:critical} or Remark \ref{ScalHardyrem} with $\gamma=p$, we obtain
$$J(f)\geq c_{p}\int_{\mathbb{G}}|\mathcal{R}g|^{p}|x|^{p-Q}dx\geq c_{p}\left(\frac{p-1}{p}\right)^{p}\int_{\mathbb{G}}\frac{\left|g(x)-g(\frac{Rx}{|x|})\right|^{p}}{\left|
\log\frac{R}{|x|}\right|^{p}|x|^{Q}}dx$$
$$=c_{p}\left(\frac{p-1}{p}\right)^{p}\int_{\mathbb{G}}\frac{\left||x|^{\frac{Q-p-\alpha p}{p}}f(x)-R^{\frac{Q-p-\alpha p}{p}}f(\frac{Rx}{|x|})\right|^{p}}{\left|
\log\frac{R}{|x|}\right|^{p}|x|^{Q}}dx$$
for any $R>0$.
Here using $f(x)=\widetilde{f}(r)$, $r=|x|$, one calculates
$$J(f)\geq c_{p}\left(\frac{p-1}{p}\right)^{p}\int_{\mathbb{G}}\frac{\left|f(x)-R^{\frac{Q-p-\alpha p}{p}}\widetilde{f}(R)|x|^{-\frac{Q-p-\alpha p}{p}}\right|^{p}}{\left|
\log\frac{R}{|x|}\right|^{p}|x|^{(\alpha+1)p}}dx$$
$$=c_{p}\left(\frac{p-1}{p}\right)^{p}\int_{\mathbb{G}}\frac{\left|f(x)-c_{f}(R)|x|^{-\frac{Q-p-\alpha p}{p}}\right|^{p}}{\left|
\log\frac{R}{|x|}\right|^{p}|x|^{(\alpha+1)p}}dx,$$
\end{proof}
yielding \eqref{aremterm9}.

\section{Critical Hardy inequalities of logarithmic type and uncertainty principle}
\label{SEC:critHardy}
In this section, we present critical Hardy inequalities of logarithmic type on the homogeneous group
$\mathbb{G}$. In the abelian isotropic case, the following result was obtained in \cite{MOW15}. In the case $\gamma=p$ this result on the homogeneous group was proved in \cite{Ruzhansky-Suragan:critical}.
\begin{thm}\label{ScalHardy} Let $\mathbb{G}$ be a homogeneous group
of homogeneous dimension $Q$. Let $|\cdot|$ be a homogeneous quasi-norm. Let $1<\gamma<\infty$ and $\max\{1,\gamma-1\}<p<\infty$.
Then for all $f\in C_{0}^{\infty}(\mathbb{G}\backslash\{0\})$ and all $R>0$ we have
\begin{equation}\label{ScalHardy1}\left\|\frac{f-f_{R}}{|x|^{\frac{Q}{p}}
\left(\log\frac{R}{|x|}\right)^{\frac{\gamma}{p}}}\right\|_{L^{p}(\mathbb{G})}
\leq\frac{p}{\gamma-1}\left\|
\frac{\mathcal{R}f}{|x|^{\frac{Q-p}{p}}\left(\log\frac{R}{|x|}\right)^{\frac{\gamma-p}{p}}} \right\|_{L^{p}(\mathbb{G})},
\end{equation}
where $f_{R}(x)=f\left(R\frac{x}{|x|}\right)$, $\mathcal{R}$ is defined in \eqref{EQ:Euler}, and the constant $\frac{p}{\gamma-1}$ is optimal.
\end{thm}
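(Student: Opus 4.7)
The plan is to reduce the inequality to a one-dimensional Hardy inequality on each ray from the origin, using the polar decomposition on $\mathbb{G}$ together with a logarithmic change of variables that converts the weights $(\log(R/|x|))^{\pm}$ into pure power weights.

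First I would write $x=ry$ with $r=|x|\in(0,\infty)$ and $y\in\wp$, and use the polar decomposition \eqref{EQ:polar}. Setting $u_{y}(r):=f(ry)-f(Ry)$, one has $u_{y}(R)=0$ and $u_{y}'(r)=\mathcal{R}f(ry)$, so both sides of \eqref{ScalHardy1} split as an integral in $d\sigma(y)$ of one-dimensional integrals in $r$. Thus it suffices to establish, for any smooth $u\in C^{1}(0,\infty)$ with $u(R)=0$ and compact support in $(0,\infty)$,
\begin{equation*}
\int_{0}^{\infty}\frac{|u(r)|^{p}}{r\left|\log\frac{R}{r}\right|^{\gamma}}dr
\leq\left(\frac{p}{\gamma-1}\right)^{p}\int_{0}^{\infty}\frac{|u'(r)|^{p}\,r^{p-1}}{\left|\log\frac{R}{r}\right|^{\gamma-p}}dr.
\end{equation*}

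Second, I would apply the substitution $t=\log(R/r)$, $r=Re^{-t}$, which sends $r=R$ to $t=0$, maps $(0,R)$ to $(0,\infty)$ and $(R,\infty)$ to $(-\infty,0)$, and satisfies $dr/r=-dt$. Setting $v(t):=u(Re^{-t})$ one finds $v(0)=0$ and $v'(t)=-r u'(r)$, so the displayed inequality becomes exactly
\begin{equation*}
\int_{\mathbb{R}}\frac{|v(t)|^{p}}{|t|^{\gamma}}dt\leq\left(\frac{p}{\gamma-1}\right)^{p}\int_{\mathbb{R}}\frac{|v'(t)|^{p}}{|t|^{\gamma-p}}dt.
\end{equation*}
Splitting $\mathbb{R}=(0,\infty)\cup(-\infty,0)$ and using the symmetry $t\mapsto -t$, it is enough to prove this on $(0,\infty)$ for $v$ with $v(0)=0$.

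Third, I would obtain the half-line inequality by the standard integration-by-parts trick: since $(t^{1-\gamma})'=(1-\gamma)t^{-\gamma}$,
\begin{equation*}
\int_{0}^{\infty}\frac{|v|^{p}}{t^{\gamma}}dt=\frac{p}{\gamma-1}\int_{0}^{\infty}|v|^{p-2}v\,v'\,t^{1-\gamma}dt,
\end{equation*}
where the boundary term at $0$ vanishes because $v(0)=0$ together with $p>\gamma-1$ yields $|v(t)|^{p}t^{1-\gamma}=o(1)$ as $t\to 0^{+}$, and the boundary term at $\infty$ vanishes from the compact support of $f$ (hence of $v$). Applying Hölder's inequality with exponents $p/(p-1)$ and $p$ to the factors $|v|^{p-1}t^{-\gamma(p-1)/p}$ and $|v'|\,t^{1-\gamma/p}$ gives exactly the factor $(p/(\gamma-1))^{p}$ on the right-hand side after dividing by $(\int_{0}^{\infty}|v|^{p}t^{-\gamma}dt)^{(p-1)/p}$. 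Combining the two half-line estimates, undoing the change of variables, and integrating in $y\in\wp$ yields \eqref{ScalHardy1}.

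Finally, for sharpness, I would test with functions $f_{\varepsilon}$ whose pullback $v_{\varepsilon}(t)$ approximates the borderline extremiser $|t|^{(\gamma-1)/p}$, suitably truncated near $t=0$ and at $|t|=\infty$ to ensure admissibility and compact support downstairs. With such a family both integration by parts and Hölder become asymptotic equalities, showing the constant $p/(\gamma-1)$ cannot be improved. The most delicate point of the argument is the treatment of the boundary term at $t=0$ in the integration by parts: this is where the hypothesis $p>\max\{1,\gamma-1\}$ is essential, and where one also has to verify that $v(t)=o(t^{(\gamma-1)/p})$ as $t\to 0$ for the admissible class of functions so that all manipulations are legitimate.
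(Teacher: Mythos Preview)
Your approach is correct and, modulo the cosmetic change of variables $t=\log(R/r)$, it is the same integration-by-parts-plus-H\"older argument the paper carries out directly in the $r$-variable (the paper's split into $B(0,R)$ and $B^{c}(0,R)$ is exactly your split into $t>0$ and $t<0$). One small correction is needed: $u_{y}(r)=f(ry)-f(Ry)$ is \emph{not} compactly supported when $f(Ry)\neq 0$, since it equals the constant $-f(Ry)$ for $r$ outside the support of $f(\cdot\,y)$; consequently $v$ is bounded but not compactly supported. The boundary contribution $|v|^{p}t^{1-\gamma}$ as $|t|\to\infty$ still vanishes, but for the reason that $v$ is bounded and $\gamma>1$ forces $t^{1-\gamma}\to 0$, not because of compact support; you should say this instead. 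Your sharpness heuristic of approximating $|t|^{(\gamma-1)/p}$ corresponds, after undoing the substitution, precisely to the paper's explicit test family $f_{k}\sim(\log(R/|x|))^{(\gamma-1)/p}$.
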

\begin{proof}[Proof of Theorem \ref{ScalHardy}]
First, let us consider the integrals in \eqref{ScalHardy1} restricted to
$B(0, R)$. Introducing polar coordinates $(r,y)=(|x|, \frac{x}{\mid x\mid})\in (0,\infty)\times\wp$ on $\mathbb{G}$, where $\wp$ is the sphere as in \eqref{EQ:sphere}, and using \eqref{EQ:polar}, we have
$$\int_{B(0,R)}\frac{|f(x)-f_{R}(x)|^{p}}
{|x|^{Q}\left|\log\frac{R}{|x|}\right|^{\gamma}}dx$$
$$=\int_{0}^{R}\int_{\wp}\frac{|f(ry)-f(Ry)|^{p}}{r^{Q}\left(\log\frac{R}{r}\right)^{\gamma}}r^{Q-1}d\sigma(y)dr$$
$$=\int_{0}^{R}\frac{d}{dr}\left(\frac{1}{(\gamma-1)\left(\log\frac{R}{r}\right)^{\gamma-1}}\int_{\wp}|f(ry)-f(Ry)|^{p}d\sigma(y)\right)dr$$
$$-\frac{p}{\gamma-1}{\rm Re}\int_{0}^{R}\left(\log\frac{R}{r}\right)^{-\gamma+1}
\int_{\wp}|f(ry)-f(Ry)|^{p-2}(f(ry)-f(Ry))\overline{\frac{df(ry)}{dr}}d\sigma(y)dr$$
$$=-\frac{p}{\gamma-1}{\rm Re}\int_{0}^{R}\left(\log\frac{R}{r}\right)^{-\gamma+1}\int_{\wp}|f(ry)-f(Ry)|^{p-2}
(f(ry)-f(Ry))\overline{\frac{df(ry)}{dr}}d\sigma(y)dr,$$
where $p-\gamma+1>0$, so that the boundary term at $r=R$ vanishes due to inequalities
$$|f(ry)-f(Ry)|\leq C(R-r),$$
$$\log\frac{R}{r}\geq\frac{R-r}{R}.$$
Then, by the H\"{o}lder inequality, we get
$$\int_{0}^{R}\int_{\wp}\frac{|f(ry)-f(Ry)|^{p}}{r\left(\log\frac{R}{r}\right)^{\gamma}}d\sigma(y)dr$$
$$=-\frac{p}{\gamma-1}{\rm Re}\int_{0}^{R}\left(\log\frac{R}{r}\right)^{-\gamma+1}\int_{\wp}|f(ry)-f(Ry)|^{p-2}
(f(ry)-f(Ry))\overline{\frac{df(ry)}{dr}}d\sigma(y)dr$$
$$\leq\frac{p}{\gamma-1}\int_{0}^{R}\left(\log\frac{R}{r}\right)^{-\gamma+1}\int_{\wp}|f(ry)-f(Ry)|^{p-1}\left|\frac{df(ry)}{dr}\right| d\sigma(y)dr$$
$$\leq\frac{p}{\gamma-1}\left(\int_{0}^{R}\int_{\wp}\frac{|f(ry)-f(Ry)|^{p}}{r\left(\log\frac{R}{r}\right)
^{\gamma}}d\sigma(y)dr\right)^{\frac{p-1}{p}}$$
$$\times\left(\int_{0}^{R}\int_{\wp}r^{p-1}\left(\log\frac{R}{r}\right)^{p-\gamma}
\left|\frac{df(ry)}{dr}\right|^{p}d\sigma(y)dr\right)^{\frac{1}{p}}.$$
Thus we obtain
$$\left(\int_{B(0,R)}\frac{\left|f(x)-f_{R}(x)\right|^{p}}{|x|^{Q}\left|\log\frac{R}{|x|}\right|^{\gamma}}
dx\right)^{\frac{1}{p}}$$
\begin{equation}\label{ScalHardy2}\leq\frac{p}{\gamma-1}\left(\int_{B(0,R)}|x|^{p-Q}\left|\log\frac{R}{|x|}\right|^{p-\gamma}
\left|\mathcal{R}f(x)\right|^{p}dx\right)^{\frac{1}{p}}.
\end{equation}
Similarly, we have
$$\left(\int_{B^{c}(0,R)}\frac{\left|f(x)-f_{R}(x)\right|^{p}}{|x|^{Q}\left|\log\frac{R}{|x|}\right|^{\gamma}}
dx\right)^{\frac{1}{p}}$$
\begin{equation}\label{ScalHardy3}\leq\frac{p}{\gamma-1}\left(\int_{B^{c}(0,R)}|x|^{p-Q}\left|\log\frac{R}{|x|}\right|
^{p-\gamma}\left|\mathcal{R}f(x)\right|^{p}dx\right)^{\frac{1}{p}}.
\end{equation}
The inequalities \eqref{ScalHardy2} and \eqref{ScalHardy3} imply \eqref{ScalHardy1}.

Now let us prove the optimality of the constant $\frac{p}{\gamma-1}$ in \eqref{ScalHardy1}. The inequality \eqref{ScalHardy1} gives that
\begin{equation}\label{optim1}
\left(\int_{B(0,R)}\frac{|f(x)|^{p}}{|x|^{Q}\left|\log\frac{R}{|x|}\right|^{\gamma}}\right)^{\frac{1}{p}}
\leq \frac{p}{\gamma-1}\left(\int_{B(0,R)}|x|^{p-Q}\left|\log\frac{R}{|x|}\right|^{p-\gamma}|\mathcal{R}f(x)|^{p}dx\right)^{\frac{1}{p}}.
\end{equation}
It is enough to prove the optimality of the constant $\frac{p}{\gamma-1}$ in \eqref{optim1}. As in the abelian case (see \cite[Section 3]{MOW15}), we define the following sequence of functions
$$f_{k}(x):=\begin{cases}
(\log(kR))^{\frac{\gamma-1}{p}}, \;\;\;{\rm when}\;\;\;|x|\leq\frac{1}{k},\\
(\log\frac{R}{|x|})^{\frac{\gamma-1}{p}}, \;\;\;{\rm when}\;\;\;\frac{1}{k}\leq|x|\leq\frac{R}{2},\\
\frac{2}{R}(\log2)^{\frac{\gamma-1}{p}}(R-|x|), \;\;\;{\rm when}\;\;\;\frac{R}{2}\leq|x|\leq R
\end{cases}$$
for large $k\in \mathbb{N}$. Letting $\widetilde{f}_{k}(r):=f_{k}(x)$ with $r=|x|\geq0$, we get
$$\frac{d}{dr}\widetilde{f}_{k}(r)=\begin{cases}
0, \;\;\;{\rm when}\;\;\;r<\frac{1}{k},\\
-\frac{\gamma-1}{p}r^{-1}(\log\frac{R}{r})^{\frac{\gamma-1}{p}-1}, \;\;\;{\rm when}\;\;\;\frac{1}{k}<r<\frac{R}{2},\\
-\frac{2}{R}(\log2)^{\frac{\gamma-1}{p}}, \;\;\;{\rm when}\;\;\;\frac{R}{2}<r<R.
\end{cases}$$
Denoting by $|\sigma|$ the $Q-1$ dimensional surface measure of the unit sphere,
by a direct calculation one has
\begin{multline*}
\int_{B(0,R)}|x|^{p-Q}
\left|\log\frac{R}{|x|}\right|^{p-\gamma}\left|\mathcal{R}f_{k}(x)\right|
^{p}dx
=|\sigma|\int_{0}^{R}r^{p-1}\left|\log\frac{R}{r}\right|^{p-\gamma}
\left|\frac{d}{dr}\widetilde{f}_{k}(r)\right|^{p}dr\\
=|\sigma|\left(\frac{\gamma-1}{p}\right)^{p}\int_{\frac{1}{k}}^{\frac{R}{2}}r^{-1}\left(\log\frac{ R}{r}\right)^{-1}dr
+|\sigma|(\log2)^{\gamma-1}\left(\frac{2}{R}\right)^{p}
\int_{\frac{R}{2}}^{R}r^{p-1}\left(\log
\frac{R}{r}\right)^{p-\gamma}dr\end{multline*}

\begin{equation} \label{optim2}
=|\sigma|\left(\frac{\gamma-1}{p}\right)^{p}\left((\log(\log kR))-\log(\log 2)\right)+C_{\gamma,p},
\end{equation}
where $$C_{\gamma,p}:=2^{p}(\log2)^{\gamma-1}|\sigma|\int_{0}^{\log2}s^{p-\gamma}
e^{-ps}ds.
$$
Since $p-\gamma+1>0$, we get $C_{\gamma,q}<+\infty$. On the other hand, we see
$$
\int_{B(0,R)}\frac{|f_{k}(x)|^{p}}{|x|^{Q}\left|\log\frac{R}{|x|}\right|^{\gamma}}dx
=|\sigma|\int_{0}^{R}\frac{|\widetilde{f}_{k}(r)|^{p}}{r\left|\log\frac{R}{r}\right|^{\gamma}}dr
$$$$=|\sigma|(\log(kR))^{\gamma-1}\int_{0}^{\frac{1}{k}}r^{-1}
\left(\log\frac{R}{r}\right)^{-\gamma}dr
+|\sigma|\int_{\frac{1}{k}}^{\frac{R}{2}}r^{-1}
\left(\log\frac{R}{r}\right)^{-1}dr$$$$+|\sigma|(\log2)^{\gamma-1}\left(\frac{2}{R}\right)^{p}
\int_{\frac{R}{2}}^{R}r^{-1}
(R-r)^{p}\left(\log\frac{R}{r}\right)^{-\gamma}dr
$$
\begin{equation}\label{optim3}
=\frac{|\sigma|}{\gamma-1}+|\sigma|(\log(\log(kR))-\log(\log(2)))
+C_{R,\gamma,p},
\end{equation}
where
$$
C_{R,\gamma,p}:=(\log2)^{\gamma-1}\left(\frac{2}{R}\right)^{p}|\sigma|\int_{\frac{R}{2}}^{R}r^{-1}
(R-r)^{p}\left(\log\frac{R}{r}\right)^{-\gamma}dr.
$$
The inequality $\log\frac{R}{r}\geq \frac{R-r}{R}$ for all $r\leq R$ and the assumption $p-\gamma>-1$, imply $C_{R,\gamma,p}<+\infty$. Then, by \eqref{optim2} and \eqref{optim3}, we have
\begin{multline*}
\left(\int_{B(0,R)}|x|^{p-Q}
\left|\log\frac{R}{|x|}\right|^{p-\gamma}\left|\mathcal{R}f_{k}(x)\right|
^{p}dx\right)\\ \times \left(\int_{B(0,R)}\frac{|f_{k}(x)|^{p}}{|x|^{Q}\left|\log\frac{R}{|x|}\right|^{\gamma}}dx\right)^{-1}
\rightarrow \left(\frac{\gamma-1}{p}\right)^{p}
\end{multline*}
as $k \rightarrow \infty$, which implies that the constant $\frac{p}{\gamma-1}$ in \eqref{optim1} is optimal.
\end{proof}
\begin{cor}[{\rm Uncertainty type principle on $\mathbb{G}$}]\label{ScalHardycor} Let $1<p<\infty$ and $q>1$ be such that
$\frac{1}{p}+\frac{1}{q}=\frac{1}{2}$. Let $1<\gamma<\infty$ and $\max\{1,\gamma-1\}<p<\infty$. Then for any $R>0$ and $f\in C_{0}^{\infty}(\mathbb{G}\backslash\{0\})$ we have
\begin{equation}\label{ScalHardycor1}
\left\|\frac{\mathcal{R}f}{|x|^{\frac{Q-p}{p}}\left(\log\frac{R}{|x|}\right)^{\frac{\gamma-p}{p}}} \right\|_{L^{p}(\mathbb{G})}\|f\|_{L^{q}(\mathbb{G})}
\geq\frac{\gamma-1}{p}\left\|\frac{f(f-f_{R})}
{|x|^{\frac{Q}{p}}\left(\log\frac{R}{|x|}\right)^{\frac{\gamma}{p}}} \right\|_{L^{2}(\mathbb{G})}.
\end{equation}
Moreover,
\begin{multline}\label{ScalHardycor2}
\left\|\frac{\mathcal{R}f}{|x|^{\frac{Q-p}{p}}\left(\log\frac{R}{|x|}\right)^{\frac{\gamma-p}{p}}} \right\|_{L^{p}(\mathbb{G})}\left\|\frac{f-f_{R}}
{|x|^{\frac{Q}{p'}}\left(\log\frac{R}{|x|}\right)^{2-\frac{\gamma}{p}}}
\right\|_{L^{p'}(\mathbb{G})}\geq\frac{\gamma-1}{p}\left\|\frac{f-f_{R}}
{|x|^{\frac{Q}{2}}\log\frac{R}{|x|}}\right\|^{2}_{L^{2}(\mathbb{G})}
\end{multline}
holds for $\frac{1}{p}+\frac{1}{p'}=1$.
\end{cor}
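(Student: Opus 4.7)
The plan is to derive both inequalities \eqref{ScalHardycor1} and \eqref{ScalHardycor2} directly from the critical Hardy inequality \eqref{ScalHardy1} of Theorem \ref{ScalHardy} combined with suitable applications of H\"{o}lder's inequality. This is the standard recipe through which uncertainty-type principles follow from Hardy-type estimates, so no new analytic idea beyond Theorem \ref{ScalHardy} is required.

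For \eqref{ScalHardycor1}, I would apply H\"{o}lder's inequality with the exponent relation $\frac{1}{p}+\frac{1}{q}=\frac{1}{2}$ to the pointwise product $f \cdot \frac{f-f_{R}}{|x|^{Q/p}(\log(R/|x|))^{\gamma/p}}$, obtaining
$$\left\|\frac{f(f-f_{R})}{|x|^{Q/p}(\log(R/|x|))^{\gamma/p}}\right\|_{L^{2}(\mathbb{G})} \leq \left\|\frac{f-f_{R}}{|x|^{Q/p}(\log(R/|x|))^{\gamma/p}}\right\|_{L^{p}(\mathbb{G})} \|f\|_{L^{q}(\mathbb{G})}.$$
Theorem \ref{ScalHardy} then bounds the first factor on the right by $\frac{p}{\gamma-1}$ times $\left\|\mathcal{R}f / \bigl(|x|^{(Q-p)/p}(\log(R/|x|))^{(\gamma-p)/p}\bigr)\right\|_{L^p(\mathbb{G})}$, and rearranging yields \eqref{ScalHardycor1}.

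For \eqref{ScalHardycor2}, I would apply H\"{o}lder's inequality with the conjugate exponents $p$ and $p'$ to the factorisation
$$\frac{|f-f_{R}|^{2}}{|x|^{Q}(\log(R/|x|))^{2}} = \frac{|f-f_{R}|}{|x|^{Q/p}(\log(R/|x|))^{\gamma/p}} \cdot \frac{|f-f_{R}|}{|x|^{Q/p'}(\log(R/|x|))^{2-\gamma/p}},$$
which is valid because $\frac{1}{p}+\frac{1}{p'}=1$ ensures that the $|x|$-powers combine to $|x|^{Q}$ and the logarithmic powers to $(\log(R/|x|))^{2}$. Integration therefore gives
$$\left\|\frac{f-f_{R}}{|x|^{Q/2}\log(R/|x|)}\right\|_{L^{2}(\mathbb{G})}^{2} \leq \left\|\frac{f-f_{R}}{|x|^{Q/p}(\log(R/|x|))^{\gamma/p}}\right\|_{L^{p}(\mathbb{G})} \left\|\frac{f-f_{R}}{|x|^{Q/p'}(\log(R/|x|))^{2-\gamma/p}}\right\|_{L^{p'}(\mathbb{G})},$$
and an application of Theorem \ref{ScalHardy} to the first factor on the right produces \eqref{ScalHardycor2}.

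Since the entire argument reduces to two invocations of H\"{o}lder's inequality followed by Theorem \ref{ScalHardy}, there is no genuine analytic obstacle; the only thing that requires attention is the bookkeeping of exponents, namely verifying that the $|x|$-powers and $\log(R/|x|)$-powers in each H\"{o}lder splitting combine correctly to produce the target weight on the left-hand side. The hypothesis $\max\{1,\gamma-1\}<p<\infty$ needed for Theorem \ref{ScalHardy} is assumed in the corollary, so no additional conditions need to be checked.
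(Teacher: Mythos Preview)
Your proposal is correct and follows essentially the same approach as the paper: both inequalities are obtained by combining Theorem~\ref{ScalHardy} with H\"older's inequality (using the exponent relation $\frac1p+\frac1q=\frac12$ for \eqref{ScalHardycor1} and $\frac1p+\frac1{p'}=1$ for \eqref{ScalHardycor2}), the only cosmetic difference being that the paper applies \eqref{ScalHardy1} first and H\"older second while you reverse the order.
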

\begin{proof}[Proof of Corollary \ref{ScalHardycor}]
By \eqref{ScalHardy1}, we have
$$\left\|\frac{\mathcal{R}f}{|x|^{\frac{Q-p}{p}}\left(\log\frac{R}{|x|}\right)^{\frac{\gamma-p}{p}}} \right\|_{L^{p}(\mathbb{G})}\|f\|_{L^{q}(\mathbb{G})}\geq\frac{\gamma-1}{p}
\left\|\frac{f-f_{R}}{|x|^{\frac{Q}{p}}\left(\log\frac{R}{|x|}\right)^{\frac{\gamma}{p}}}\right\|_{L^{p}(\mathbb{G})}
\|f\|_{L^{q}(\mathbb{G})}$$
$$=\frac{\gamma-1}{p}
\left(\int_{\mathbb{G}}\left|\frac{f(x)-f_{R}(x)}{|x|^{\frac{Q}{p}}\left(\log\frac{R}{|x|}\right)^{\frac{\gamma}{p}}}
\right|^{2\frac{p}{2}}dx\right)^{\frac{1}{2}\frac{2}{p}}
\left(\int_{\mathbb{G}}|f(x)|^{2\frac{q}{2}}dx\right)^{\frac{1}{2}\frac{2}{q}},$$
and using the H\"{o}lder inequality, we obtain
$$\left\||x|^{\frac{p-Q}{p}}\left(\log\frac{R}{|x|}\right)^{\frac{p-\gamma}{p}}
\mathcal{R}f \right\|_{L^{p}(\mathbb{G})}\|f\|_{L^{q}(\mathbb{G})}$$
$$\geq\frac{\gamma-1}{p}\left(\int_{\mathbb{G}}\left|\frac{f(x)(f(x)-f_{R}(x))}
{|x|^{\frac{Q}{p}}\left(\log\frac{R}{|x|}\right)^{\frac{\gamma}{p}}}\right|^{2}dx\right)^{\frac{1}{2}}
=\frac{\gamma-1}{p}\left\|\frac{f(f-f_{R})}
{|x|^{\frac{Q}{p}}\left(\log\frac{R}{|x|}\right)^{\frac{\gamma}{p}}} \right\|_{L^{2}(\mathbb{G})}.$$
Similarly, one can prove \eqref{ScalHardycor2}.
\end{proof}
\begin{rem}\label{ScalHardyrem}
When $\gamma=p$, Theorem \ref{ScalHardy} gives \cite[Theorem 3.1]{Ruzhansky-Suragan:critical}:
\begin{equation}\label{ScalHardy4}
\left\|\frac{f-f_{R}}{|x|^{\frac{Q}{p}}\log\frac{R}{|x|}}\right\|_{L^{p}(\mathbb{G})}
\leq\frac{p}{p-1}\left\||x|^{\frac{p-Q}{p}}
\mathcal{R}f \right\|_{L^{p}(\mathbb{G})}, \;\;1<p<\infty,
\end{equation}
for all $R>0$.
\end{rem}

\section{Critical and subcritical Hardy inequalities}
\label{SEC:crit_subcrit_con}

In this section, we study the relation between the critical and the subcritical Hardy inequalities on homogeneous groups.

\begin{prop}\label{crit_subcrit_pr2}
Let $\mathbb{G}$ be a homogeneous group
of homogeneous dimension $Q\geq 3$ and $Q\geq m+1$, $m\geq2$. Let $|\cdot|$ be a homogeneous quasi-norm. Then for any nonnegative radial function $g\in C_{0}^{1}(B^{m}(0,R)\backslash\{0\})$, there exists a nonnegative radial function $f\in C_{0}^{1}(B^{Q}(0,1)\backslash\{0\})$ such that
$$\int_{B^{Q}(0,1)}|\mathcal{R}f|^{m}dx-\left(\frac{Q-m}{m}\right)^{m}\int_{B^{Q}(0,1)}\frac{|f|^{m}}{|x|^{m}}dx$$
\begin{equation}\label{crit_subcrit3}
=\frac{|\sigma|}
{|\widetilde{\sigma}|}\left(\frac{Q-m}{m-1}\right)^{m-1}\left(\int_{B^{m}(0,R)}|\mathcal{R}g|^{m}dz-
\left(\frac{m-1}{m}\right)^{m}\int_{B^{m}(0,R)}\frac{|g|^{m}}{|z|^{m}\left(\log\frac{Re}{|z|}\right)^{m}}dz\right)
\end{equation}
holds true, where $\mathcal{R}$ is defined in \eqref{EQ:Euler}, $|\sigma|$ and $|\widetilde{\sigma}|$ are $Q-1$ and $m-1$ dimensional surface measure of the unit sphere, respectively.
\end{prop}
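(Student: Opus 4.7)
The plan is to reduce both Hardy differences to a common one-dimensional functional on $(0,\infty)$ via polar coordinates and a chain of ground state substitutions and logarithmic changes of variable, and then transport $g$ to $f$ through the resulting identification. The fact that \eqref{crit_subcrit3} is an equality (rather than an inequality) strongly suggests that after the right substitutions both sides will be equal to $(\text{constant})\cdot J(w)$ for the same universal one-variable functional $J$, with the constant pinned down to exactly $\bigl(\frac{Q-m}{m-1}\bigr)^{m-1}$.

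On the critical side, I would write $g(z) = G(|z|)$, use polar coordinates, and then successively set $\tau := \log(Re/s)$, $\tilde G(\tau) := G(Re^{1-\tau})$, perform the ground state substitution $V(\tau) := \tau^{-(m-1)/m}\tilde G(\tau)$, and finally $\lambda := \log\tau$, $\tilde V(\lambda) := V(\tau)$. Standard ground state manipulations should then yield
$$\int_{B^m(0,R)}|\mathcal{R}g|^m dz - \left(\frac{m-1}{m}\right)^m \int_{B^m(0,R)} \frac{|g|^m}{|z|^m(\log\frac{Re}{|z|})^m}\, dz = |\widetilde\sigma|\, J(\tilde V),$$
where
$$J(w) := \int_0^\infty \left|\frac{m-1}{m}w(\lambda) + w'(\lambda)\right|^m d\lambda - \left(\frac{m-1}{m}\right)^m \int_0^\infty |w(\lambda)|^m d\lambda.$$
On the subcritical side, I would write $f(x) = F(|x|)$, put $F(r) = r^{-(Q-m)/m} u(r)$, and substitute $t := -\log r$, $\bar u(t) := u(e^{-t})$. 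The analogous reduction should give
$$\int_{B^Q(0,1)}|\mathcal{R}f|^m dx - \left(\frac{Q-m}{m}\right)^m \int_{B^Q(0,1)} \frac{|f|^m}{|x|^m}\, dx = |\sigma|\, K(\bar u),$$
where $K$ has the same structure as $J$ but with the constant $\frac{Q-m}{m}$ in place of $\frac{m-1}{m}$.

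The identification between $K$ and $J$ will then be accomplished by the linear rescaling $\bar u(t) := \tilde V(\mu t)$ with $\mu := \frac{Q-m}{m-1}$. A direct calculation gives $\frac{Q-m}{m}\bar u(t) + \bar u'(t) = \mu\bigl[\frac{m-1}{m}\tilde V + \tilde V'\bigr](\mu t)$, and after the change of variables $\lambda = \mu t$ in both integrals defining $K(\bar u)$, the clean identity $K(\bar u) = \mu^{m-1} J(\tilde V)$ drops out, which combined with the two reductions above gives exactly \eqref{crit_subcrit3}. The desired $f$ is then produced by undoing the chain of substitutions from $\tilde V$ back to $F$ and setting $f(x) := F(|x|)$.

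The main obstacle will be the bookkeeping of supports and regularity to guarantee that $f \in C_0^1(B^Q(0,1)\setminus\{0\})$. Since $g \in C_0^1(B^m(0,R)\setminus\{0\})$ vanishes in neighbourhoods of both $0$ and $\partial B^m(0,R)$, the function $\tilde V$ will be compactly supported in some interval $[\lambda_1,\lambda_2]\subset(0,\infty)$ (vanishing of $g$ near $|z|=R$ is exactly what keeps $\tilde V$ away from $\lambda = 0$), hence $\bar u$ will be compactly supported in $(0,\infty)$ and $u(r) = \bar u(-\log r)$ compactly supported strictly inside $(0,1)$; multiplication by the smooth factor $r^{-(Q-m)/m}$ on that support preserves $C^1$ regularity and the sign, so $f \geq 0$ and $f \in C_0^1(B^Q(0,1)\setminus\{0\})$ as required. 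The algebraic identity $\bigl(\frac{Q-m}{m}\bigr)^m/\mu = \mu^{m-1}\bigl(\frac{m-1}{m}\bigr)^m$, immediate from $\mu = (Q-m)/(m-1)$, is what produces the precise factor $\bigl(\frac{Q-m}{m-1}\bigr)^{m-1}$ in \eqref{crit_subcrit3}.
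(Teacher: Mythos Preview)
Your proposal is correct. When the chain of substitutions you describe is unwound, it produces exactly the transformation the paper uses: composing $t=-\log r$, $\lambda=\mu t$, $\tau=e^{\lambda}$, $s=Re^{1-\tau}$ together with the two ground-state factors (which cancel because $\mu(m-1)/m=(Q-m)/m$) gives
\[
F(r)=G\bigl(R\exp(1-r^{-(Q-m)/(m-1)})\bigr),
\]
which is precisely the single explicit change of variable $s(r)=R\exp\bigl(1-r^{-(Q-m)/(m-1)}\bigr)$ the paper writes down at the outset. The paper then simply computes the two integrals on the left of \eqref{crit_subcrit3} in polar coordinates and pushes them through $r\mapsto s$, arriving at the right-hand side in a few lines. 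Your route factors the same computation through the common one-variable functionals $J$ and $K$ and the rescaling $\lambda=\mu t$; this is longer but has the virtue of explaining structurally where the constant $\bigl(\tfrac{Q-m}{m-1}\bigr)^{m-1}=\mu^{m-1}$ comes from, whereas the paper's argument posits the change of variable and verifies the identity directly.
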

\begin{proof}[Proof of Proposition \ref{crit_subcrit_pr2}] Let $r=|x|$, $x\in\mathbb{G}$ and $s=|z|$, $z\in\mathbb{\widetilde{G}}$, where $\mathbb{\widetilde{G}}$ is a homogeneous group
of homogeneous dimension $m$. Let us define a radial function $f=f(x)\in C_{0}^{1}(B^{Q}(0,1)\backslash\{0\})$ for a nonnegative radial function $g=g(z)\in C_{0}^{1}(B^{m}(0,R)\backslash\{0\})$:
\begin{equation}\label{crit_subcrit4}
f(r)=g(s(r)),
\end{equation}
where $s(r)=R\exp(1-r^{-\frac{Q-m}{m-1}})$, that is,
$$r^{-\frac{Q-m}{m-1}}=\log\frac{Re}{s}, \;s'(r)=\frac{Q-m}{m-1}r^{-\frac{Q-m}{m-1}-1}s(r).$$

Here we see that $s'(r)>0$ for $r\in[0,1]$ and $s(0)=0$, $s(1)=R$. Since $g(s)\equiv0$ near $s=R$, we also note that $f\equiv0$ near $r=1$. Then a direct calculation shows
$$\int_{B^{Q}(0,1)}|\mathcal{R}f|^{m}dx-\left(\frac{Q-m}{m}\right)^{m}\int_{B^{Q}(0,1)}\frac{|f|^{m}}{|x|^{m}}dx$$
$$=|\sigma|\int_{0}^{1}|f'(r)|^{m}r^{Q-1}dr-\left(\frac{Q-m}{m}\right)^{m}|\sigma|\int_{0}^{1}f^{m}(r)r^{Q-m-1}dr$$
$$=|\sigma|\int_{0}^{R}|g'(s)s'(r(s))|^{m}r^{Q-1}(s)\frac{ds}{s'(r(s))}-\left(\frac{Q-m}{m}\right)^{m}|\sigma|\int_{0}^{R}g^{m}(s)
r^{Q-m-1}(s)\frac{ds}{s'(r(s))}$$
$$=|\sigma|\left(\frac{Q-m}{m-1}\right)^{m-1}\int_{0}^{R}|g'(s)|^{m}s^{m-1}ds-\left(\frac{Q-m}{m}\right)^{m}\frac{m-1}{Q-m}
|\sigma|\int_{0}^{R}\frac{g^{m}(s)}{s\left(\log\frac{Re}{s}\right)^{m}}ds$$
$$=\frac{|\sigma|}{|\widetilde{\sigma}|}\left(\frac{Q-m}{m-1}\right)^{m-1}\left(\int_{B^{m}(0,R)}|\mathcal{R}g|^{m}dz-
\left(\frac{m-1}{m}\right)^{m}\int_{B^{m}(0,R)}\frac{|g|^{m}}{|z|^{m}\left(\log\frac{Re}{|z|}\right)^{m}}dz\right),$$
yielding \eqref{crit_subcrit3}.
\end{proof}

\section{Extended Caffarelli-Kohn-Nirenberg inequalities}\label{SEC:CKN}

In this section, we introduce new Caffarelli-Kohn-Nirenberg type inequalities in the Euclidean setting of $\Rn$ as well as on homogeneous groups. For the convenience of the reader we recall Theorem \ref{THM:CKN-i} and then also explain how it implies Theorem \ref{clas_CKN-2}:

\begin{thm}\label{CKN_thm}
Let $\mathbb{G}$ be a homogeneous group
of homogeneous dimension $Q$. Let $|\cdot|$ be a homogeneous quasi-norm. Let $1<p,q<\infty$, $0<r<\infty$ with $p+q\geq r$ and $\delta\in[0,1]\cap\left[\frac{r-q}{r},\frac{p}{r}\right]$ and $a$, $b$, $c\in\mathbb{R}$. Assume that $\frac{\delta r}{p}+\frac{(1-\delta)r}{q}=1$ and $c=\delta(a-1)+b(1-\delta)$. Then we have the following Caffarelli-Kohn-Nirenberg type inequalities for all $f\in C_{0}^{\infty}(\mathbb{G}\backslash\{0\})$:

If $Q\neq p(1-a)$, then
\begin{equation}\label{CKN_thm2}
\||x|^{c}f\|_{L^{r}(\mathbb{G})}
\leq \left|\frac{p}{Q-p(1-a)}\right|^{\delta} \left\||x|^{a}\mathcal{R}f\right\|^{\delta}_{L^{p}(\mathbb{G})}
\left\||x|^{b}f\right\|^{1-\delta}_{L^{q}(\mathbb{G})}.
\end{equation}
If $Q=p(1-a)$, then
\begin{equation}\label{CKN_thm1}
\left\||x|^{c}f\right\|_{L^{r}(\mathbb{G})}
\leq p^{\delta} \left\||x|^{a}\log|x|\mathcal{R}f\right\|^{\delta}_{L^{p}(\mathbb{G})}
\left\||x|^{b}f\right\|^{1-\delta}_{L^{q}(\mathbb{G})}.
\end{equation}
The constant in the inequality \eqref{CKN_thm2} is sharp for $p=q$ with $a-b=1$ or $p\neq q$ with $p(1-a)+bq\neq0$. Moreover, the constants in \eqref{CKN_thm2} and \eqref{CKN_thm1} are sharp for $\delta=0$ or $\delta=1$. Here $\mathcal{R}:=\frac{d}{d|x|}$ is the radial derivative.
\end{thm}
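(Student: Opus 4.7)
The strategy reduces the CKN-type bound to two familiar ingredients: H\"older's inequality, which cleanly decouples the two factors on the right, and a one-weight $L^{p}$-Hardy inequality (subcritical or logarithmic, according as $Q \neq p(1-a)$ or $Q = p(1-a)$) for the surviving $L^{p}$-moment of $f$. Using $c = \delta(a-1)+(1-\delta)b$ and $\frac{\delta r}{p}+\frac{(1-\delta)r}{q} = 1$, the pointwise factorisation
\begin{equation*}
|x|^{cr}|f|^{r} = \bigl(|x|^{(a-1)p}|f|^{p}\bigr)^{\delta r/p}\bigl(|x|^{bq}|f|^{q}\bigr)^{(1-\delta)r/q}
\end{equation*}
holds identically; the hypothesis $\delta \in [0,1]\cap[(r-q)/r,\,p/r]$ guarantees that both exponents lie in $[0,1]$, so H\"older's inequality with conjugate exponents $p/(\delta r)$ and $q/((1-\delta)r)$ gives
\begin{equation*}
\||x|^{c}f\|_{L^{r}(\mathbb G)}^{r} \leq \Bigl(\int_{\mathbb G}|x|^{(a-1)p}|f|^{p}\,dx\Bigr)^{\delta r/p}\||x|^{b}f\|_{L^{q}(\mathbb G)}^{(1-\delta)r}.
\end{equation*}

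I then estimate the first factor by a one-weight $L^{p}$-Hardy inequality of the right type. If $Q \neq p(1-a)$, I apply Theorem \ref{CKN} with parameter choices $a \mapsto -a$ and $b \mapsto (1-a)(p-1)$, chosen so that the two weights on $|f|^{p}$ appearing there coincide; cancelling one factor yields
\begin{equation*}
\int_{\mathbb G}|x|^{(a-1)p}|f|^{p}\,dx \leq \left|\frac{p}{Q-p(1-a)}\right|^{p}\int_{\mathbb G}|x|^{ap}|\mathcal R f|^{p}\,dx,
\end{equation*}
and substituting back and taking the $r$-th root produces \eqref{CKN_thm2}. In the critical case $Q = p(1-a)$ this bound degenerates, so instead I shall prove the logarithmic Hardy inequality
\begin{equation*}
\int_{\mathbb G}\frac{|f|^{p}}{|x|^{Q}}\,dx \leq p^{p}\int_{\mathbb G}\frac{|\log|x||^{p}|\mathcal R f|^{p}}{|x|^{Q-p}}\,dx
\end{equation*}
directly in polar coordinates: using $\frac{d}{dr}\log r = 1/r$, an integration by parts (boundary terms vanish since $f \in C_{0}^{\infty}(\mathbb G\setminus\{0\})$) followed by H\"older with exponents $p$ and $p/(p-1)$ produces the constant $p^{p}$. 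For non-radial $f$ I pass to the spherical mean $U(r) = \bigl(|\sigma|^{-1}\int_{\wp}|f(ry)|^{p}\,d\sigma(y)\bigr)^{1/p}$ and invoke the inequality $\int_{\mathbb G}|\mathcal R U|^{p}|x|^{-\alpha p}\,dx \leq \int_{\mathbb G}|\mathcal R f|^{p}|x|^{-\alpha p}\,dx$ together with $\int_{\mathbb G}|U|^{p}|x|^{\theta}\,dx = \int_{\mathbb G}|f|^{p}|x|^{\theta}\,dx$, both established inside the proof of Theorem \ref{aremterm}; substitution then yields \eqref{CKN_thm1}.

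For sharpness, the cases $\delta \in \{0,1\}$ collapse either to a triviality or to the sharp $L^{p}$-Hardy inequality derived above. When $p=q$ and $a-b=1$, the constraints force $r = p$ and $c = a-1$, H\"older becomes an equality, and the whole inequality again reduces to the sharp Hardy bound. In the remaining case $p \neq q$ with $p(1-a)+bq \neq 0$, I shall test on the truncated Hardy-extremiser $f_{\varepsilon}(x) = |x|^{-(Q-p(1-a))/p}\chi_{\varepsilon}(|x|)$, where $\chi_{\varepsilon}$ is a smooth radial cut-off to the annulus $\varepsilon \leq |x| \leq \varepsilon^{-1}$; the nondegeneracy condition $p(1-a)+bq \neq 0$ forces the $L^{q}$-moment of $f_{\varepsilon}$ to diverge at only one end of the annulus, and a coordinated choice of the cut-off parameter drives the ratio of the two sides of \eqref{CKN_thm2} to $\left|p/(Q-p(1-a))\right|^{\delta}$. \emph{The main obstacle lies exactly here}: the Hardy extremiser is not admissible in $L^{q}$, and H\"older is not saturated on it, so the truncation must be engineered to drive both inequalities to equality along the same limiting sequence; the hypothesis $p(1-a)+bq \neq 0$ is precisely what makes such a simultaneous tuning possible.
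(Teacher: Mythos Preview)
Your derivation of inequalities \eqref{CKN_thm2} and \eqref{CKN_thm1} follows the same two-step plan as the paper: the pointwise factorisation of $|x|^{cr}|f|^{r}$ followed by H\"older with exponents $p/(\delta r)$ and $q/((1-\delta)r)$, and then a weighted $L^{p}$-Hardy inequality on the factor $\||x|^{a-1}f\|_{L^{p}}$. Two minor simplifications are available. First, the paper invokes Theorem~\ref{L_p_weighted_th} directly for both the subcritical and the critical Hardy bound, rather than extracting the subcritical one from Theorem~\ref{CKN}. Second, the integration-by-parts proof of the logarithmic Hardy inequality already handles arbitrary (non-radial) $f\in C_{0}^{\infty}(\mathbb G\setminus\{0\})$: one integrates the identity $\partial_{r}(|f|^{p})=p\,\mathrm{Re}\bigl(|f|^{p-2}f\,\overline{\partial_{r}f}\bigr)$ against $\log r$ over each ray and then over $\wp$, so your detour through the spherical mean $U$ is unnecessary.

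Where your plan genuinely departs from the paper, and where it runs into trouble, is the sharpness argument for $\delta\in(0,1)$ with $p\neq q$. You test on truncations of the Hardy extremiser $|x|^{C_{0}}$, $C_{0}=-(Q-p(1-a))/p$. For this exponent the power $(a-1)p+C_{0}p+Q$ vanishes, so $\||x|^{a-1}f_{\varepsilon}\|_{L^{p}}^{p}$ grows only like $\log(1/\varepsilon)$, while $\||x|^{b}f_{\varepsilon}\|_{L^{q}}^{q}$ and $\||x|^{c}f_{\varepsilon}\|_{L^{r}}^{r}$ grow polynomially with matching rates (indeed $cr+C_{0}r+Q=\tfrac{(1-\delta)r}{q}(bq+C_{0}q+Q)$). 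The CKN ratio on $f_{\varepsilon}$ therefore carries a residual factor of order $(\log(1/\varepsilon))^{-\delta/p}\to 0$, and since the ratio is scale-invariant no ``coordinated'' tuning of the cut-offs can remove this logarithmic loss. The paper instead tests on $h(x)=|x|^{C}$ with $C=(p(1-a)+bq)/(p-q)$: this is exactly the exponent that makes the H\"older step \eqref{CKN_thm1_1} an \emph{identity} (the two integrands $|x|^{(a-1)p}|h|^{p}$ and $|x|^{bq}|h|^{q}$ coincide pointwise), and because every nonzero power already meets the H\"older equality condition~\eqref{Holder_eq1} inside the Hardy step, both inequalities in the chain are saturated simultaneously by $h$. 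The hypothesis $p(1-a)+bq\neq 0$ is precisely the condition $C\neq 0$.
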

\begin{rem} Our conditions $\frac{\delta r}{p}+\frac{(1-\delta)r}{q}=1$ and $c=\delta(a-1)+b(1-\delta)$ imply the condition \eqref{clas_CKN2} of Theorem \ref{clas_CKN}, and in our case $a-d=1$.
\end{rem}
\begin{rem}\label{CKN_rem1}
In the abelian case $\mathbb{G}=(\Rn,+)$ and $Q=n$, \eqref{CKN_thm1} implies a new type of Caffarelli-Kohn-Nirenberg inequality for any quasi-norm on $\Rn$: Let $1<p,q<\infty$, $0<r<\infty$ with $p+q\geq r$ and $\delta\in[0,1]\cap\left[\frac{r-q}{r},\frac{p}{r}\right]$ and $a$, $b$, $c\in\mathbb{R}$. Assume that $\frac{\delta r}{p}+\frac{(1-\delta)r}{q}=1$, $n=p(1-a)$ and $c=\delta(a-1)+b(1-\delta)$. Then we have the Caffarelli-Kohn-Nirenberg type inequality for any function $f\in C_{0}^{\infty}(\mathbb{R}^{n}\backslash\{0\})$ and for any homogeneous quasi-norm $|\cdot|$:
\begin{equation}\label{CKN_rem2}
\left\||x|^{c}f\right\|_{L^{r}(\Rn)}
\leq p^{\delta} \left\||x|^{a}\log|x|\left(\frac{x}{|x|}\cdot\nabla f\right)\right\|^{\delta}_{L^{p}(\Rn)}
\left\||x|^{b}f\right\|^{1-\delta}_{L^{q}(\Rn)}.
\end{equation}
By the Schwarz inequality with the standard Euclidean distance given by $|x|=\sqrt{x_{1}^{2}+x_{2}^{2}+...+x_{n}^{2}}$, we obtain the Euclidean form of the Caffarelli-Kohn-Nirenberg type inequality:
\begin{equation}\label{CKN_rem3}
\left\||x|^{c}f\right\|_{L^{r}(\Rn)}
\leq p^{\delta} \left\||x|^{a}\log|x|\nabla f\right\|^{\delta}_{L^{p}(\Rn)}
\left\||x|^{b}f\right\|^{1-\delta}_{L^{q}(\Rn)},
\end{equation}
where $\nabla$ is the standard gradient in $\Rn$.
Similarly, we write the inequality \eqref{CKN_thm2} in the abelian case: Let $1<p,q<\infty$, $0<r<\infty$ with $p+q\geq r$ and $\delta\in[0,1]\cap\left[\frac{r-q}{r},\frac{p}{r}\right]$ and $a$, $b$, $c\in\mathbb{R}$. Assume that $\frac{\delta r}{p}+\frac{(1-\delta)r}{q}=1$, $n\neq p(1-a)$ and $c=\delta(a-1)+b(1-\delta)$. Then we have Caffarelli-Kohn-Nirenberg type inequality for any function $f\in C_{0}^{\infty}(\mathbb{G}\backslash\{0\})$ and for any homogeneous quasi-norm $|\cdot|$:
\begin{equation}
\||x|^{c}f\|_{L^{r}(\Rn)}
\leq \left|\frac{p}{n-p(1-a)}\right|^{\delta} \left\||x|^{a}\left(\frac{x}{|x|}\cdot\nabla f\right)\right\|^{\delta}_{L^{p}(\Rn)}
\left\||x|^{b}f\right\|^{1-\delta}_{L^{q}(\Rn)}.
\end{equation}
Then, using the Schwarz inequality with the standard Euclidean distance given by $|x|=\sqrt{x_{1}^{2}+x_{2}^{2}+...+x_{n}^{2}}$, we obtain the Euclidean form of the Caffarelli-Kohn-Nirenberg type inequality:
\begin{equation}\label{CKN_rem3_1}
\||x|^{c}f\|_{L^{r}(\Rn)}
\leq \left|\frac{p}{n-p(1-a)}\right|^{\delta} \left\||x|^{a}\nabla f\right\|^{\delta}_{L^{p}(\Rn)}
\left\||x|^{b}f\right\|^{1-\delta}_{L^{q}(\Rn)}.
\end{equation}
Note that if
\begin{equation}\label{CKNrem1}\frac{1}{p}+\frac{a}{n}>0, \;\frac{1}{q}+\frac{b}{n}>0 \;\;{\rm and}\;\; \frac{1}{r}+\frac{c}{n}>0
\end{equation} hold, then the inequality \eqref{CKN_rem3_1} is contained in the family of Caffarelli-Kohn-Nirenberg inequalities \cite{CKN84}. In this case, if we require $p=q$ with $a-b=1$ or $p\neq q$ with $p(1-a)+bq\neq0$, then we obtain the inequality \eqref{CKN_rem3_1} with the sharp constant. Moreover, the constants $\left|\frac{p}{n-p(1-a)}\right|^{\delta}$ and $p^{\delta}$ are sharp for $\delta=0$ or $\delta=1$. If \eqref{CKNrem1} is not satisfied, then the inequality \eqref{CKN_rem3_1} is not covered by Theorem \ref{clas_CKN} because condition \eqref{clas_CKN0} fails. So we obtain a new range of the  Caffarelli-Kohn-Nirenberg inequality \cite{CKN84}.

Thus, the inequalities \eqref{CKN_rem3} and \eqref{CKN_rem3_1} are new already in the abelian case and, moreover, \eqref{CKN_thm2} and \eqref{CKN_thm1} hold for any choice of homogeneous quasi-norm.
\end{rem}

The proof of Theorem \ref{CKN_thm} will be based on the following family of weighted Hardy inequalities that was obtained in \cite[Theorem 3.4]{RSY16}, where $\mathbb{E}=|x|\mathcal{R}$ is the Euler operator.

\begin{thm}[\cite{RSY16}]\label{L_p_weighted_th}
Let $\mathbb{G}$ be a homogeneous group
of homogeneous dimension $Q$ and let $\alpha\in \mathbb{R}$.
Then for all complex-valued functions $f\in C^{\infty}_{0}(\mathbb{G}\backslash\{0\}),$ $1<p<\infty,$ and any homogeneous quasi-norm $|\cdot|$ on $\mathbb{G}$ for $\alpha p \neq Q$ we have
\begin{equation}\label{L_p_weighted}
\left\|\frac{f}{|x|^{\alpha}}\right\|_{L^{p}(\mathbb{G})}\leq
\left|\frac{p}{Q-\alpha p}\right|\left\|\frac{1}{|x|^{\alpha}}\mathbb{E} f\right\|_{L^{p}(\mathbb{G})}.
\end{equation}
If $\alpha p\neq Q$ then the constant $\left|\frac{p}{Q-\alpha p}\right|$ is sharp.
For $\alpha p=Q$ we have
\begin{equation}\label{L_p_weighted_log}
\left\|\frac{f}{|x|^{\frac{Q}{p}}}\right\|_{L^{p}(\mathbb{G})}\leq
p\left\|\frac{\log|x|}{|x|^{\frac{Q}{p}}}\mathbb{E} f\right\|_{L^{p}(\mathbb{G})},
\end{equation}
where the constant $p$ is sharp.
\end{thm}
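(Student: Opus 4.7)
The natural strategy is to reduce the claim to a one-dimensional weighted Hardy inequality via the polar decomposition \eqref{EQ:polar}. Concretely, for each $y\in\wp$ I would set $g_y(r):=f(ry)$; by compact support, $g_y\in C_0^\infty((0,\infty))$, and from $\mathbb{E}f(x)=|x|\mathcal{R}f(x)$ we have $(\mathbb{E}f)(ry)=r\,g_y'(r)$. Both integrals in \eqref{L_p_weighted} and \eqref{L_p_weighted_log} then take the form $\int_{\wp}\!\int_0^\infty(\cdots)\,r^{Q-1-\alpha p}\,dr\,d\sigma(y)$, so it is enough to prove, for every compactly supported $g\in C^\infty((0,\infty))$, the one-dimensional bounds with $\beta:=Q-\alpha p$:
\[
\int_0^\infty |g|^p\,r^{\beta-1}\,dr \;\le\; \left|\tfrac{p}{\beta}\right|^{p}\int_0^\infty |rg'|^p\,r^{\beta-1}\,dr \quad (\beta\neq 0),
\]
and the corresponding logarithmic inequality when $\beta=0$.

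For $\beta\neq 0$, the key identity is $(r^\beta)'=\beta\,r^{\beta-1}$. Integrating by parts (boundary terms vanish by compact support) gives
\[
\beta\int_0^\infty |g|^p r^{\beta-1}\,dr \;=\; -p\int_0^\infty |g|^{p-2}\,\mathrm{Re}(g\,\overline{g'})\,r^{\beta}\,dr.
\]
Taking absolute values and applying H\"older's inequality with conjugate exponents $p/(p-1)$ and $p$, using the splitting $r^{\beta}=r^{(\beta-1)(p-1)/p}\cdot r\cdot r^{(\beta-1)/p}$, yields
\[
|\beta|\int_0^\infty|g|^p r^{\beta-1}\,dr \;\le\; p\Big(\!\int_0^\infty|g|^p r^{\beta-1}\,dr\Big)^{\!(p-1)/p}\!\Big(\!\int_0^\infty|rg'|^p r^{\beta-1}\,dr\Big)^{\!1/p},
\]
and dividing by the first factor on the right gives \eqref{L_p_weighted} with constant $|p/(Q-\alpha p)|$.

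The critical case $\alpha p=Q$ is treated by exactly the same machine, except that the antiderivative of the weight $r^{-1}$ is $\log r$ rather than a power. Integration by parts gives $\int_0^\infty|g|^p r^{-1}\,dr=-p\int_0^\infty|g|^{p-2}\,\mathrm{Re}(g\,\overline{g'})\,\log r\,dr$, and H\"older applied to the splitting $|g|^{p-1}r^{-1/p'}\cdot|g'|\,|\log r|\,r^{1/p'}$ yields \eqref{L_p_weighted_log} with constant $p$.

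Sharpness is the part requiring the most care and is where the main obstacle lies. For $\alpha p\neq Q$, I would test with truncations of the formal extremal $|x|^{-\beta/p}$, e.g.\ $f_\varepsilon(x)=|x|^{-\beta/p}\eta_\varepsilon(\log|x|)$ for a smooth cutoff $\eta_\varepsilon$ that flattens to $1$ on $[-1/\varepsilon,1/\varepsilon]$; a direct polar-coordinate computation shows that both integrals scale as $1/\varepsilon$ and that the H\"older step becomes asymptotically sharp, forcing the ratio to converge to $(|\beta|/p)^p$. In the critical case one tests analogously with truncations of $|\log|x||^{s}$ for $sp$ approaching $-1$, for which the cutoff and boundary contributions are of lower order than the leading logarithmic divergence. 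The difficulty is entirely bookkeeping: the cutoffs must be designed so that the trial function remains in $C_0^\infty(\mathbb{G}\setminus\{0\})$ while keeping both integrals logarithmically large and their ratio precisely tracking the claimed sharp constant.
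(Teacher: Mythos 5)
Your proof of the two inequalities is essentially the paper's own argument: polar decomposition \eqref{EQ:polar}, integration by parts against the antiderivative of the radial weight ($r^{\beta}/\beta$ in the subcritical case, $\log r$ in the critical case), and H\"older with exactly the splitting you describe. Working fiberwise in $y\in\wp$ and integrating over the sphere at the end, versus keeping the $d\sigma(y)$ integral inside throughout as the paper does, is a purely cosmetic difference. The one place you genuinely diverge is the sharpness claim. The paper disposes of it by exhibiting the formal extremisers $g(x)=|x|^{-C}$ and $h(x)=(\log|x|)^{C}$ and checking that they turn the H\"older step into an equality (equations \eqref{Holder_eq1} and \eqref{Holder_eq2}); this is quick but leaves implicit both the truncation needed to make these admissible test functions in $C_{0}^{\infty}(\mathbb{G}\backslash\{0\})$ and the fact that the earlier step of passing from $-\mathrm{Re}(g\,\overline{g'})$ to $|g||g'|$ must also be asymptotically saturated. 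Your route --- truncating $|x|^{-\beta/p}$ by a cutoff $\eta_{\varepsilon}(\log|x|)$ and showing both sides grow like $1/\varepsilon$ while the cutoff contributes $O(1)$ --- is the rigorous completion of the same idea and does address both saturation issues at once; the price is the bookkeeping you acknowledge, which is routine in the subcritical case but requires a little more care in the critical case, where the trial functions $|\log|x||^{s}$ with $sp\downarrow -1$ concentrate near $|x|=1$ and must be cut off there as well as near $0$ and $\infty$. Both treatments are correct in substance; yours is more explicit where the paper is more terse.
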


We briefly recall its proof for the convenience of the reader but also since it will be useful in our argument.

\begin{proof}[Proof of Theorem \ref{L_p_weighted_th}]
Using integration by parts, for $\alpha p \neq Q$ we obtain
\begin{multline*}
\int_{\mathbb{G}}\frac{|f(x)|^{p}}{|x|^{\alpha p}}dx=\int_{0}^{\infty}\int_{\wp}|f(ry)|^{p}r^{Q-1-\alpha p}d\sigma(y)dr\\
=-\frac{p}{Q-\alpha p}\int_{0}^{\infty} r^{Q-\alpha p} {\rm Re} \int_{\wp}|f(ry)|^{p-2} f(ry) \overline{\frac{df(ry)}{dr}}d\sigma(y)dr\\
\leq \left|\frac{p}{Q-\alpha p}\right|\int_{\mathbb{G}}\frac{|\mathbb{E}f(x)||f(x)|^{p-1}}{|x|^{\alpha p}}dx=
\left|\frac{p}{Q-\alpha p}\right|\int_{\mathbb{G}}\frac{|\mathbb{E}f(x)||f(x)|^{p-1}}{|x|^{\alpha+\alpha (p-1)}}dx.
\end{multline*}
By H\"{o}lder's inequality, it follows that
$$
\int_{\mathbb{G}}\frac{|f(x)|^{p}}{|x|^{\alpha p}}dx\leq \left|\frac{p}{Q-\alpha p}\right|\left(\int_{\mathbb{G}}\frac{|\mathbb{E}f(x)|^{p}}{|x|^{\alpha p}}dx\right)
^{\frac{1}{p}}\left(\int_{\mathbb{G}}\frac{|f(x)|^{p}}{|x|^{\alpha p}}dx\right)^{\frac{p-1}{p}},
$$
which gives \eqref{L_p_weighted}.

Now we show the sharpness of the constant. We need to check the equality
condition in above H\"older's inequality.
Let us consider the function
\begin{equation}\label{2}
g(x)=\frac{1}{|x|^{C}},
\end{equation}
where $C\in\mathbb{R}, C\neq 0$ and $\alpha p\neq Q$. Then by a direct calculation we obtain
\begin{equation}\label{Holder_eq1}
\left|\frac{1}{C}\right|^{p}\left(\frac{|\mathbb{E}g(x)|}{|x|^{\alpha }}\right)^{p}=\left(\frac{|g(x)|^{p-1}}
{|x|^{\alpha (p-1)}}\right)^{\frac{p}{p-1}},
\end{equation}
which satisfies the equality condition in H\"older's inequality.
This gives the sharpness of the constant $\left|\frac{p}{Q-\alpha p}\right|$ in \eqref{L_p_weighted}.

Now let us prove \eqref{L_p_weighted_log}. Using integration by parts, we have
\begin{multline*}
\int_{\mathbb{G}}\frac{|f(x)|^{p}}{|x|^{Q}}dx=\int_{0}^{\infty}\int_{\wp}|f(ry)|^{p}r^{Q-1-Q}d\sigma(y)dr\\
=-p\int_{0}^{\infty} \log r {\rm Re} \int_{\wp}|f(ry)|^{p-2} f(ry) \overline{\frac{df(ry)}{dr}}d\sigma(y)dr\\
\leq p \int_{\mathbb{G}}\frac{|\mathbb{E}f(x)||f(x)|^{p-1}}{|x|^{Q}}|\log|x||dx=
p\int_{\mathbb{G}}\frac{|\mathbb{E}f(x)|\log|x|||}{|x|^{\frac{Q}{p}}}\frac{|f(x)|^{p-1}}{|x|^{\frac{Q(p-1)}{p}}}dx.
\end{multline*}
By H\"{o}lder's inequality, it follows that
$$
\int_{\mathbb{G}}\frac{|f(x)|^{p}}{|x|^{Q}}dx\leq p\left(\int_{\mathbb{G}}\frac{|\mathbb{E}f(x)|^{p}|\log|x||^{p}}{|x|^{Q}}dx\right)
^{\frac{1}{p}}\left(\int_{\mathbb{G}}\frac{|f(x)|^{p}}{|x|^{Q}}dx\right)^{\frac{p-1}{p}},
$$
which gives \eqref{L_p_weighted_log}.

Now we show the sharpness of the constant. We need to check the equality
condition in above H\"older's inequality.
Let us consider the function
$$h(x)=(\log|x|)^{C},$$
where $C\in\mathbb{R}$ and $C\neq 0$.
Then by a direct calculation we obtain
\begin{equation}\label{Holder_eq2}
\left|\frac{1}{C}\right|^{p}\left(\frac{|\mathbb{E}h(x)||\log|x||}{|x|^{\frac{Q}{p}}}\right)^{p}=\left(\frac{|h(x)|^{p-1}}
{|x|^{\frac{Q (p-1)}{p}}}\right)^{\frac{p}{p-1}},
\end{equation}
which satisfies the equality condition in H\"older's inequality.
This gives the sharpness of the constant $p$ in \eqref{L_p_weighted_log}.
\end{proof}

We are now ready to prove Theorem \ref{CKN_thm}.

\begin{proof}[Proof of Theorem \ref{CKN_thm}] {\bf Case $\delta=0$}. In this case, we have $q=r$ and $b=c$ by $\frac{\delta r}{p}+\frac{(1-\delta)r}{q}=1$ and $c=\delta(a-1)+b(1-\delta)$, respectively. Then, the inequalities \eqref{CKN_thm2} and \eqref{CKN_thm1} are equivalent to the trivial estimate
$$
\||x|^{b}f\|_{L^{q}(\mathbb{G})}
\leq \left\||x|^{b}f\right\|_{L^{q}(\mathbb{G})}.
$$
{\bf Case $\delta=1$}. Notice that in this case, $p=r$ and $a-1=c$. By Theorem \ref{L_p_weighted_th}, we have for $Q+pc=Q+p(a-1)\neq0$ the inequality
$$\||x|^{c}f\|_{L^{r}(\mathbb{G})}\leq \left|\frac{p}{Q+pc}\right|\||x|^{c}\mathbb{E}f\|_{L^{r}(\mathbb{G})},$$
where $\mathbb{E}=|x|\mathcal{R}$ is the Euler operator. Taking into account this, we get
$$\||x|^{c}f\|_{L^{r}(\mathbb{G})}\leq \left|\frac{p}{Q+pc}\right|\||x|^{c+1}\mathcal{R}f\|_{L^{r}(\mathbb{G})}$$
$$=\left|\frac{p}{Q-p(1-a)}\right|\||x|^{a}\mathcal{R}f\|_{L^{p}(\mathbb{G})},$$
which implies \eqref{CKN_thm2}.
For $Q+pc=Q+p(a-1)=0$ by Theorem \ref{L_p_weighted_th} we obtain
$$\||x|^{c}f\|_{L^{r}(\mathbb{G})}\leq p\||x|^{c}\log|x|\mathbb{E}f\|_{L^{r}(\mathbb{G})}=p\||x|^{c+1}\log|x|\mathcal{R}f\|_{L^{r}(\mathbb{G})}$$
$$=p\||x|^{a}\log|x|\mathcal{R}f\|_{L^{p}(\mathbb{G})}$$
which gives \eqref{CKN_thm1}. In this case, the constants in \eqref{CKN_thm2} and \eqref{CKN_thm1} are sharp, since the constants in Theorem \ref{L_p_weighted_th} are sharp.

{\bf Case $\delta\in(0,1)\cap\left[\frac{r-q}{r},\frac{p}{r}\right]$}.
Taking into account $c=\delta(a-1)+b(1-\delta)$,  a direct calculation gives $$\||x|^{c}f\|_{L^{r}(\mathbb{G})}=
\left(\int_{\mathbb{G}}|x|^{cr}|f(x)|^{r}dx\right)^{\frac{1}{r}}
=\left(\int_{\mathbb{G}}\frac{|f(x)|^{\delta r}}{|x|^{\delta r (1-a)}}\cdot \frac{|f(x)|^{(1-\delta)r}}{|x|^{-br(1-\delta)}}dx\right)^{\frac{1}{r}}.$$
Since we have $\delta\in(0,1)\cap\left[\frac{r-q}{r},\frac{p}{r}\right]$ and $p+q\geq r$, then by using H\"{o}lder's inequality for $\frac{\delta r}{p}+\frac{(1-\delta)r}{q}=1$, we obtain
$$\||x|^{c}f\|_{L^{r}(\mathbb{G})}
\leq \left(\int_{\mathbb{G}}\frac{|f(x)|^{p}}{|x|^{p(1-a)}}dx\right)^{\frac{\delta}{p}}
\left(\int_{\mathbb{G}}\frac{|f(x)|^{q}}{|x|^{-bq}}dx\right)^{\frac{1-\delta}{q}}$$
\begin{equation}\label{CKN_thm1_1}=\left\|\frac{f}{|x|^{1-a}}\right\|^{\delta}_{L^{p}(\mathbb{G})}
\left\|\frac{f}{|x|^{-b}}\right\|^{1-\delta}_{L^{q}(\mathbb{G})}.
\end{equation}
Here we note that when $p=q$ and $a-b=1$ H\"{o}lder's equality condition is held for any function. We also note that in the case $p\neq q$ the function
\begin{equation}\label{Holder_eq_2}
h(x)=|x|^{\frac{1}{(p-q)}\left(p(1-a)+bq\right)}
\end{equation} satisfies H\"{o}lder's equality condition:
$$\frac{|h|^{p}}{|x|^{p(1-a)}}=\frac{|h|^{q}}{|x|^{-bq}}.$$
If $Q\neq p(1-a)$, then by Theorem \ref{L_p_weighted_th}, we have
$$\left\|\frac{f}{|x|^{1-a}}\right\|^{\delta}_{L^{p}(\mathbb{G})}\leq
\left|\frac{p}{Q-p(1-a)}\right|^{\delta} \left\|\frac{\mathbb{E}f}{|x|^{1-a}}\right\|^{\delta}_{L^{p}(\mathbb{G})}$$
\begin{equation}\label{1}=
\left|\frac{p}{Q-p(1-a)}\right|^{\delta} \left\|\frac{\mathcal{R}f}{|x|^{-a}}\right\|^{\delta}_{L^{p}(\mathbb{G})}, \;\;1<p<\infty.
\end{equation}
Putting this in \eqref{CKN_thm1_1}, one has
$$\||x|^{c}f\|_{L^{r}(\mathbb{G})}\leq
\left|\frac{p}{Q-p(1-a)}\right|^{\delta} \left\|\frac{\mathcal{R}f}{|x|^{-a}}\right\|^{\delta}_{L^{p}(\mathbb{G})}
\left\|\frac{f}{|x|^{-b}}\right\|^{1-\delta}_{L^{q}(\mathbb{G})}.$$
We note that in the case $p=q$, $a-b=1$ H\"older's equality condition of the inequalities \eqref{CKN_thm1_1} and \eqref{1} holds true for $g(x)$ in \eqref{2}. Moreover, in the case $p\neq q$, $p(1-a)+bq\neq0$ H\"older's equality condition of the inequalities \eqref{CKN_thm1_1} and \eqref{1} holds true for $h(x)$ in \eqref{Holder_eq_2}. Therefore, the constant in \eqref{CKN_thm2} is sharp when $p=q$, $a-b=1$ or $p\neq q$, $p(1-a)+bq\neq0$.

Now let us consider the case $Q=p(1-a)$. Using Theorem \ref{L_p_weighted_th}, one has
$$\left\|\frac{f}{|x|^{1-a}}\right\|^{\delta}_{L^{p}(\mathbb{G})}\leq
p^{\delta} \left\|\frac{\log|x|}{|x|^{1-a}}\mathbb{E}f\right\|^{\delta}_{L^{p}(\mathbb{G})}, \;\;1<p<\infty.$$
Then, putting this in \eqref{CKN_thm1_1}, we obtain
$$\||x|^{c}f\|_{L^{r}(\mathbb{G})}\leq
p^{\delta} \left\|\frac{\log|x|}{|x|^{1-a}}\mathbb{E}f\right\|^{\delta}_{L^{p}(\mathbb{G})}
\left\|\frac{f}{|x|^{-b}}\right\|^{1-\delta}_{L^{q}(\mathbb{G})}
$$
$$=p^{\delta} \left\|\frac{\log|x|}{|x|^{-a}}\mathcal{R}f\right\|^{\delta}_{L^{p}(\mathbb{G})}
\left\|\frac{f}{|x|^{-b}}\right\|^{1-\delta}_{L^{q}(\mathbb{G})}.$$
\end{proof}

\section{$L^{p}$-Hardy inequalities with super weights}
\label{SEC:2}

We now discuss versions of Hardy inequalities with more general weights, that we call superweights.
The following is the main result of this section.

\begin{thm}\label{1}
Let $\mathbb{G}$ be a homogeneous group
of homogeneous dimension $Q$ and let $|\cdot|$ be a homogeneous quasi-norm on $\mathbb{G}$. Let $a,b>0$ and $1<p<\infty, Q\geq1$.
\begin{itemize}
\item[(i)] If $\alpha \beta>0$ and $pm\leq Q-p$, then for all $f\in C_{0}^{\infty}(\mathbb{G}\backslash\{0\})$, we have
\begin{equation}\label{Lpweighted1}
\frac{Q-pm-p}{p}
\left\|\frac{(a+b|x|^{\alpha})^{\frac{\beta}{p}}}{|x|^{m+1}}f\right\|_{L^{p}(\mathbb{G})}
\leq\left\|\frac{(a+b|x|^{\alpha})^{\frac{\beta}{p}}}{|x|^{m}}\mathcal{R}f\right\|_{L^{p}(\mathbb{G})} .
\end{equation}
If $Q\neq pm+p$, then the constant $\frac{Q-pm-p}{p}$ is sharp.

\item[(ii)] If $\alpha \beta<0$ and $pm-\alpha\beta\leq Q-p$, then for all $f\in C_{0}^{\infty}(\mathbb{G}\backslash\{0\})$, we have
\begin{equation}\label{Lpweighted2}
\frac{Q-pm+\alpha\beta-p}{p}
\left\|\frac{(a+b|x|^{\alpha})^{\frac{\beta}{p}}}{|x|^{m+1}}f\right\|_{L^{p}(\mathbb{G})}
\leq\left\|\frac{(a+b|x|^{\alpha})^{\frac{\beta}{p}}}{|x|^{m}}\mathcal{R}f\right\|_{L^{p}(\mathbb{G})}
.
\end{equation}
If $Q\neq pm+p-\alpha\beta$, then the constant $\frac{Q-pm+\alpha\beta-p}{p}$ is sharp.
\end{itemize}
\end{thm}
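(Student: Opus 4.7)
My plan follows the template of the proof of the weighted Hardy inequality Theorem \ref{L_p_weighted_th}: pass to polar coordinates via \eqref{EQ:polar}, integrate by parts in the radial variable $r=|x|$, and apply Hölder's inequality. Let
\[
I := \int_\mathbb{G} \frac{(a+b|x|^\alpha)^\beta}{|x|^{(m+1)p}} |f|^p\,dx = |\sigma|\int_0^\infty\!\int_\wp (a+br^\alpha)^\beta\,|f(ry)|^p\,r^{Q-1-(m+1)p}\,d\sigma(y)\,dr.
\]
The key ingredient is the antiderivative identity
\[
\frac{d}{dr}\left[(a+br^\alpha)^\beta r^{Q-pm-p}\right] = (Q-pm-p)(a+br^\alpha)^\beta r^{Q-pm-p-1} + \alpha\beta b\,(a+br^\alpha)^{\beta-1} r^{Q-pm-p+\alpha-1}.
\]
Integrating by parts in $r$, with boundary terms vanishing because $f\in C_0^\infty(\mathbb{G}\setminus\{0\})$, then gives
\[
(Q-pm-p)\,I = -p\,\mathrm{Re}\int_\mathbb{G} \frac{(a+b|x|^\alpha)^\beta}{|x|^{pm+p-1}}\,|f|^{p-2}f\,\overline{\mathcal{R}f}\,dx \;-\; \alpha\beta b \int_\mathbb{G} \frac{(a+b|x|^\alpha)^{\beta-1}}{|x|^{pm+p-\alpha}}\,|f|^p\,dx.
\]

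For case (i), with $\alpha\beta>0$ and $Q-pm-p>0$, the last integral is nonnegative while its coefficient is negative, so it may be dropped. Bounding $|\mathrm{Re}(|f|^{p-2}f\,\overline{\mathcal{R}f})|\leq |f|^{p-1}|\mathcal{R}f|$ and splitting the integrand as $A\cdot B$ with $A=(a+b|x|^\alpha)^{\beta/p}|x|^{-m}|\mathcal{R}f|$ and $B=(a+b|x|^\alpha)^{\beta(p-1)/p}|x|^{-(m+1)(p-1)}|f|^{p-1}$, Hölder's inequality with exponents $(p,\,p/(p-1))$ bounds the right-hand side by $p\,\|(a+b|x|^\alpha)^{\beta/p}|x|^{-m}\mathcal{R}f\|_{L^p}\cdot I^{(p-1)/p}$. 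Dividing by $I^{(p-1)/p}$ produces \eqref{Lpweighted1}; the boundary case $pm=Q-p$ is trivial since the constant vanishes.

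For case (ii), with $\alpha\beta<0$, the extra integral has the opposite sign and cannot be discarded. Instead I would absorb it into $I$ via the elementary identity $br^\alpha=(a+br^\alpha)-a$, which yields
\[
-\alpha\beta b\int_\mathbb{G} \frac{(a+b|x|^\alpha)^{\beta-1}}{|x|^{pm+p-\alpha}}|f|^p\,dx = -\alpha\beta\,I + a\alpha\beta\int_\mathbb{G} \frac{(a+b|x|^\alpha)^{\beta-1}}{|x|^{pm+p}}|f|^p\,dx.
\]
Moving $-\alpha\beta I$ to the left converts the coefficient of $I$ from $Q-pm-p$ to $Q-pm+\alpha\beta-p$, while the remaining term $a\alpha\beta\int(a+b|x|^\alpha)^{\beta-1}|x|^{-pm-p}|f|^p\,dx$ is now nonpositive (since $a>0$ and $\alpha\beta<0$) and is discarded. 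The same Hölder step then yields \eqref{Lpweighted2}. Sharpness of the constants, under the strict hypotheses $Q\neq pm+p$ and $Q\neq pm+p-\alpha\beta$ respectively, follows by testing with suitable $C_0^\infty$ cutoffs of $f(x)=|x|^{-C}$ with $C$ chosen to saturate the Hölder equality condition, localised in the regime where one of $a$ or $b|x|^\alpha$ dominates — so that the super-weight effectively becomes a pure power and the sharp constant of Theorem \ref{L_p_weighted_th} is recovered.

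The main obstacle I anticipate is case (ii): the naive integration by parts generates a term of the wrong sign, and without the right algebraic manipulation it is not obvious how the constant $(Q-pm+\alpha\beta-p)/p$ should appear. The identity $br^\alpha=(a+br^\alpha)-a$ cleanly converts the offending term into $-\alpha\beta I$ (shifting the left-hand coefficient by exactly $\alpha\beta$) plus a remainder of the favourable sign to be discarded. A secondary subtlety is that the would-be extremisers $|x|^{-C}$ are not themselves admissible test functions, so the sharpness step requires a truncation-and-limit argument of the kind used in the proof of Theorem \ref{L_p_weighted_th}, together with the asymptotic degeneration of the super-weight in the chosen localisation regime.
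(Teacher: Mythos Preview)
Your proposal is correct and follows essentially the same route as the paper: polar coordinates, the antiderivative identity for $(a+br^\alpha)^\beta r^{Q-pm-p}$, integration by parts, dropping the sign-favourable extra term, and H\"older. For case (ii) the paper's presentation differs only cosmetically --- it writes $1=\frac{br^\alpha}{a+br^\alpha}+\frac{a}{a+br^\alpha}$ and inflates the second fraction by the factor $\frac{Q-pm-p}{Q-pm-p+\alpha\beta}\geq 1$ before recognising the total derivative of $\frac{(a+br^\alpha)^\beta r^{Q-pm-p}}{Q-pm-p+\alpha\beta}$ --- but this is algebraically identical to your use of $br^\alpha=(a+br^\alpha)-a$.

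Two minor remarks. First, your polar decomposition carries a spurious $|\sigma|$ (compare \eqref{EQ:polar}), though this is harmless since it cancels from both sides. Second, your sharpness discussion is actually more careful than the paper's: the paper simply checks the H\"older equality condition for $g(x)=|x|^C$ without addressing the discarded term, whereas you correctly note that a genuine approximation argument is required and that one should localise where the superweight degenerates to a pure power --- the $a$-dominated regime (so that the effective weight is $\sim a^{\beta/p}|x|^{-m-1}$) for case (i), and the $b|x|^\alpha$-dominated regime (effective weight $\sim b^{\beta/p}|x|^{\alpha\beta/p-m-1}$) for case (ii), matching the respective constants.
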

\begin{proof}[Proof of Theorem \ref{1}] We may assume that $Q\neq pm+p$ since for $Q=pm+p$ there is nothing
to prove. Introducing polar coordinates $(r,y)=(|x|, \frac{x}{\mid x\mid})\in (0,\infty)\times\wp$ on $\mathbb{G}$, where $\wp$ is the
unit quasi-sphere
\begin{equation}\label{EQ:sphere}
\wp:=\{x\in \mathbb{G}:\,|x|=1\},
\end{equation} and using the polar decomposition on homogeneous groups (see, for example, \cite{FS-Hardy} or \cite{FR}) and integrating by parts, we get
\begin{equation}\label{eqp}
\int_{\mathbb{G}}
\frac{(a+b|x|^{\alpha})^{\beta}}{|x|^{pm+p}}|f(x)|^{p}dx
=\int_{0}^{\infty}\int_{\wp}
\frac{(a+br^{\alpha})^{\beta}}{r^{pm+p}}|f(ry)|^{p} r^{Q-1}d\sigma(y)dr.
\end{equation}
(i) Since $a,b>0$, $\alpha \beta>0$ and $m<\frac{Q-p}{p}$ we obtain
$$
\int_{\mathbb{G}}
\frac{(a+b|x|^{\alpha})^{\beta}}{|x|^{pm+p}}|f(x)|^{p}dx$$
$$\leq\int_{0}^{\infty}\int_{\wp}
(a+br^{\alpha})^{\beta}r^{Q-1-pm-p}\left(\frac{\alpha\beta b r^{\alpha}}{(a+br^{\alpha})(Q-pm-p)}+1\right)|f(ry)|^{p} d\sigma(y)dr
$$
$$=\int_{0}^{\infty}\int_{\wp}
\frac{d}{dr}\left(\frac{(a+br^{\alpha})^{\beta}r^{Q-pm-p}}{Q-pm-p}\right)|f(ry)|^{p} d\sigma(y)dr$$
$$
=-\frac{p}{Q-pm-p}\int_{0}^{\infty}(a+br^{\alpha})^{\beta}r^{Q-pm-p}  \,{\rm Re}\int_{\wp}
|f(ry)|^{p-2} f(ry) \overline{\frac{df(ry)}{dr}}d\sigma(y)dr
$$
$$\leq \left|\frac{p}{Q-pm-p}\right|\int_{\mathbb{G}}\frac{(a+b|x|^{\alpha})^{\beta}|\mathcal{R}f(x)||f(x)|^{p-1}}{|x|^{pm+p-1}}dx
$$
$$=\frac{p}{Q-pm-p}
\int_{\mathbb{G}}\frac{(a+b|x|^{\alpha})^
{\frac{\beta(p-1)}{p}}|f(x)|^{p-1}}{|x|^{(m+1)(p-1)}}
\frac{(a+b|x|^{\alpha})^
{\frac{\beta}{p}}}{|x|^{m}}|\mathcal{R}f(x)|dx.$$
By H\"{o}lder's inequality, it follows that
$$
\int_{\mathbb{G}}
\frac{(a+b|x|^{\alpha})^{\beta}}{|x|^{pm+p}}|f(x)|^{p}dx$$
$$\leq\frac{p}{Q-pm-p}\left(\int_{\mathbb{G}}\frac{(a+b|x|^{\alpha})^{\beta}}{|x|^{pm+p}}|f(x)|^{p}dx\right)^\frac{p-1}{p}
\left(\int_{\mathbb{G}}\frac{(a+b|x|^{\alpha})^{\beta}}{|x|^{pm}}|\mathcal{R}f(x)|^{p}dx\right)^\frac{1}{p},
$$
which gives \eqref{Lpweighted1}.

Now we show the sharpness of the constant. We need to check the equality
condition in above H\"older's inequality.
Let us consider the function
$$g(x)=|x|^{C}, $$
where $C\in\mathbb{R}, C\neq 0$ and $Q\neq pm+p$. Then by a direct calculation we obtain
\begin{equation}\label{Holder_eq1}
\left|\frac{1}{C}\right|^{p}\left(\frac{(a+b|x|^{\alpha})^
{\frac{\beta}{p}}|\mathcal{R}g(x)|}{|x|^{m}}\right)^{p}=\left(\frac{(a+b|x|^{\alpha})^
{\frac{\beta(p-1)}{p}}|g(x)|^{p-1}}
{|x|^{(m+1) (p-1)}}\right)^{\frac{p}{p-1}},
\end{equation}
which satisfies the equality condition in H\"older's inequality.
This gives the sharpness of the constant $\frac{Q-pm-p}{p}$ in \eqref{Lpweighted1}.

Let us now prove Part (ii). Here we also assume that, $Q\neq pm+p-\alpha\beta$ since for $Q=pm+p-\alpha\beta$ there is nothing
to prove. Using the polar decomposition, we have
the equality \eqref{eqp}.
Since $\alpha \beta<0$ and $pm-\alpha\beta<Q-p$ we obtain
$$
\int_{\mathbb{G}}
\frac{(a+b|x|^{\alpha})^{\beta}}{|x|^{pm+p}}|f(x)|^{p}dx$$
$$\leq\int_{0}^{\infty}\int_{\wp}
(a+br^{\alpha})^{\beta}r^{Q-1-pm-p}\left(\frac{b r^{\alpha}}{a+br^{\alpha}}+\frac{a}{a+br^{\alpha}}
\cdot\frac{Q-pm-p}{Q-pm-p+\alpha\beta}\right)|f(ry)|^{p} d\sigma(y)dr
$$
$$=\int_{0}^{\infty}\int_{\wp}
\frac{(a+br^{\alpha})^{\beta}r^{Q-1-pm-p}}{Q-pm-p+\alpha\beta}
\left(\frac{\alpha \beta b r^{\alpha}}{a+br^{\alpha}}+Q-pm-p\right)|f(ry)|^{p} d\sigma(y)dr
$$
$$=\int_{0}^{\infty}\int_{\wp}
\frac{d}{dr}\left(\frac{(a+br^{\alpha})^{\beta}
	r^{Q-pm-p}}{Q-pm-p+\alpha\beta}\right)|f(ry)|^{p}
d\sigma(y)dr$$
$$
=-\frac{p}{Q-pm-p+\alpha\beta}\int_{0}^{\infty}(a+br^{\alpha})^{\beta}r^{Q-pm-p}  \,{\rm Re}\int_{\wp}
|f(ry)|^{p-2} f(ry) \overline{\frac{df(ry)}{dr}}d\sigma(y)dr
$$
$$\leq \left|\frac{p}{Q-pm-p+\alpha\beta}\right|\int_{\mathbb{G}}\frac{(a+b|x|^{\alpha})^{\beta}|\mathcal{R}f(x)||f(x)|^{p-1}}{|x|^{pm+p-1}}dx
$$
$$=\frac{p}{Q-pm-p+\alpha\beta}
\int_{\mathbb{G}}\frac{(a+b|x|^{\alpha})^
{\frac{\beta(p-1)}{p}}|f(x)|^{p-1}}{|x|^{(m+1)(p-1)}}
\frac{(a+b|x|^{\alpha})^
{\frac{\beta}{p}}}{|x|^{m}}|\mathcal{R}f(x)|dx.$$
By H\"{o}lder's inequality, it follows that
$$
\int_{\mathbb{G}}
\frac{(a+b|x|^{\alpha})^{\beta}}{|x|^{pm+p}}|f(x)|^{p}dx$$
$$\leq\frac{p}{Q-pm-p+\alpha\beta}\left(\int_{\mathbb{G}}\frac{(a+b|x|^{\alpha})^{\beta}}{|x|^{pm+p}}|f(x)|^{p}dx\right)^\frac{p-1}{p}
\left(\int_{\mathbb{G}}\frac{(a+b|x|^{\alpha})^{\beta}}{|x|^{pm}}|\mathcal{R}f(x)|^{p}dx\right)^\frac{1}{p},
$$
which gives \eqref{Lpweighted2}.

Now we show the sharpness of the constant. We need to check the equality
condition in above H\"older's inequality.
Let us consider the function
$$h(x)=|x|^{C}, $$
where $C\in\mathbb{R}, C\neq 0$ and $Q\neq pm+p-\alpha\beta$. Then by a direct calculation we obtain
\begin{equation}\label{Holder_eq2}
\left|\frac{1}{C}\right|^{p}\left(\frac{(a+b|x|^{\alpha})^
{\frac{\beta}{p}}|\mathcal{R}h(x)|}{|x|^{m}}\right)^{p}=\left(\frac{(a+b|x|^{\alpha})^
{\frac{\beta(p-1)}{p}}|h(x)|^{p-1}}
{|x|^{(m+1) (p-1)}}\right)^{\frac{p}{p-1}},
\end{equation}
which satisfies the equality condition in H\"older's inequality.
This gives the sharpness of the constant $\frac{Q-pm-p+\alpha\beta}{p}$
in \eqref{Lpweighted2}.
\end{proof}

\section{Higher order inequalities}
\label{Sec3}
In this section we present higher order $L^{p}$-Hardy type inequalities with super weights by iterating the obtained inequalities
\eqref{Lpweighted1} and \eqref{Lpweighted2}.
\begin{thm}\label{2}
Let $\mathbb{G}$ be a homogeneous group
of homogeneous dimension $Q$ and let $|\cdot|$ be a homogeneous quasi-norm on $\mathbb{G}$. Let $a,b>0$ and $1<p<\infty, Q\geq1, k\in \mathbb{N}$.
\begin{itemize}
\item[(i)] If $\alpha \beta>0$ and $pm\leq Q-p$, then for all $f\in C_{0}^{\infty}(\mathbb{G}\backslash\{0\})$, we have
\begin{equation}\label{Lpweighted3}
\left[\prod_{j=0}^{k-1}\left(\frac{Q-p}{p}-(m+j)\right)\right]
\left\|\frac{(a+b|x|^{\alpha})^{\frac{\beta}{p}}}{|x|^{m+k}}f\right\|_{L^{p}(\mathbb{G})}
\leq\left\|\frac{(a+b|x|^{\alpha})^{\frac{\beta}{p}}}{|x|^{m}}\mathcal{R}^{k}f\right\|_{L^{p}(\mathbb{G})}
.
\end{equation}

\item[(ii)] If $\alpha \beta<0$ and $pm-\alpha\beta\leq Q-p$, then for all $f\in C_{0}^{\infty}(\mathbb{G}\backslash\{0\})$, we have
\begin{equation}\label{Lpweighted4}
\left[\prod_{j=0}^{k-1}\left(\frac{Q-p+\alpha\beta}{p}-(m+j)\right)\right]
\left\|\frac{(a+b|x|^{\alpha})^{\frac{\beta}{p}}}{|x|^{m+k}}f\right\|_{L^{p}(\mathbb{G})}
\leq\left\|\frac{(a+b|x|^{\alpha})^{\frac{\beta}{p}}}{|x|^{m}}\mathcal{R}^{k}f\right\|_{L^{p}(\mathbb{G})}
.
\end{equation}
\end{itemize}
\end{thm}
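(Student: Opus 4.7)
The plan is to establish both parts of Theorem \ref{2} by induction on $k \geq 1$, iterating Theorem \ref{1} at each step. The base case $k=1$ is nothing but Theorem \ref{1} itself: the product in \eqref{Lpweighted3} (respectively \eqref{Lpweighted4}) reduces to the single factor $\frac{Q-p}{p}-m = \frac{Q-pm-p}{p}$ (respectively $\frac{Q-p+\alpha\beta}{p}-m = \frac{Q-pm+\alpha\beta-p}{p}$), so \eqref{Lpweighted3} coincides with \eqref{Lpweighted1} and \eqref{Lpweighted4} coincides with \eqref{Lpweighted2}.

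For the inductive step in Part (i), assume \eqref{Lpweighted3} holds for some $k\geq 1$. The idea is to split the chain $f\mapsto \mathcal{R}^{k+1}f$ into one application of Theorem \ref{1}(i), followed by the $k$-fold inductive hypothesis. Applying Theorem \ref{1}(i) with the weight exponent $m$ shifted to $m+k$ gives
\begin{equation*}
\left(\frac{Q-p}{p}-(m+k)\right)\left\|\frac{(a+b|x|^{\alpha})^{\beta/p}}{|x|^{m+k+1}}f\right\|_{L^{p}(\mathbb{G})}\leq\left\|\frac{(a+b|x|^{\alpha})^{\beta/p}}{|x|^{m+k}}\mathcal{R}f\right\|_{L^{p}(\mathbb{G})}.
\end{equation*}
Then I apply the inductive hypothesis with $\mathcal{R}f$ in the role of $f$, which is permissible since $\mathcal{R}$ preserves $C_{0}^{\infty}(\mathbb{G}\setminus\{0\})$ away from the origin, to get
\begin{equation*}
\left[\prod_{j=0}^{k-1}\left(\frac{Q-p}{p}-(m+j)\right)\right]\left\|\frac{(a+b|x|^{\alpha})^{\beta/p}}{|x|^{m+k}}\mathcal{R}f\right\|_{L^{p}(\mathbb{G})}\leq\left\|\frac{(a+b|x|^{\alpha})^{\beta/p}}{|x|^{m}}\mathcal{R}^{k+1}f\right\|_{L^{p}(\mathbb{G})}.
\end{equation*}
Chaining these two estimates delivers \eqref{Lpweighted3} at level $k+1$, the new factor $\frac{Q-p}{p}-(m+k)$ slotting into the product at index $j=k$. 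Part (ii) is proved by the same inductive scheme using Theorem \ref{1}(ii) at each stage; the shift of the constants by $\alpha\beta/p$ propagates identically, yielding the factor $\frac{Q-p+\alpha\beta}{p}-(m+k)$ at step $k+1$.

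The two minor technicalities to address are, first, that each intermediate function $\mathcal{R}^{j}f$ remains in the class to which Theorem \ref{1} applies---immediate because $\mathcal{R}$ preserves both smoothness on $\mathbb{G}\setminus\{0\}$ and compact support away from the origin (and one may smooth by density if desired for a non-smooth quasi-norm)---and second, that the admissibility hypothesis of Theorem \ref{1} at stage $j$, namely $p(m+j)\leq Q-p$ in Part (i) (respectively $p(m+j)-\alpha\beta\leq Q-p$ in Part (ii)), precisely governs the sign of the corresponding factor in the product. Whenever such a factor degenerates the asserted inequality trivialises at that stage without interrupting the chain, so the induction propagates uniformly and produces the full statement.
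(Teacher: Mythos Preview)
Your proof is correct and follows essentially the same route as the paper: both arguments iterate Theorem \ref{1}, shifting the weight exponent by one at each step and replacing $f$ by $\mathcal{R}f$, with the paper writing out the case $k=2$ explicitly and then asserting the general pattern while you package the same iteration as a formal induction. Your closing remarks on the class preservation of $\mathcal{R}^{j}f$ and on the degenerate factors make explicit points the paper leaves implicit, but the underlying mechanism is identical.
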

In the case of $k=1$ \eqref{Lpweighted3} gives inequality \eqref{Lpweighted1} and \eqref{Lpweighted4} gives inequality \eqref{Lpweighted2}.
\begin{proof}[Proof of Theorem \ref{2}] We can iterate \eqref{Lpweighted1}, that is, we have
\begin{equation}\label{Lpiterate0}
\frac{Q-pm-p}{p}
\left\|\frac{(a+b|x|^{\alpha})^{\frac{\beta}{p}}}{|x|^{m+1}}f\right\|_{L^{p}(\mathbb{G})}
\leq\left\|\frac{(a+b|x|^{\alpha})^{\frac{\beta}{p}}}{|x|^{m}}\mathcal{R}f\right\|_{L^{p}(\mathbb{G})}.
\end{equation}
In \eqref{Lpiterate0} replacing $f$ by $\mathcal{R}f$ we obtain
\begin{equation}\label{Lpiterate1}
\frac{Q-pm-p}{p}
\left\|\frac{(a+b|x|^{\alpha})^{\frac{\beta}{p}}}{|x|^{m+1}}\mathcal{R}f\right\|_{L^{p}(\mathbb{G})}
\leq\left\|\frac{(a+b|x|^{\alpha})^{\frac{\beta}{p}}}{|x|^{m}}\mathcal{R}^{2}f\right\|_{L^{p}(\mathbb{G})}.
\end{equation}
On the other hand, replacing $m$ by $m+1$, \eqref{Lpiterate0} gives
\begin{equation}\label{Lpiterate2}
\frac{Q-p(m+1)-p}{p}
\left\|\frac{(a+b|x|^{\alpha})^{\frac{\beta}{p}}}{|x|^{m+2}}f\right\|_{L^{p}(\mathbb{G})}
\leq\left\|\frac{(a+b|x|^{\alpha})^{\frac{\beta}{p}}}{|x|^{m+1}}\mathcal{R}f\right\|_{L^{p}(\mathbb{G})}
.
\end{equation}
Combining this with \eqref{Lpiterate1} we obtain
\begin{multline*}\label{Lpiterate3}
\frac{Q-pm-p}{p}\cdot\frac{Q-p(m+1)-p}{p}
\left\|\frac{(a+b|x|^{\alpha})^{\frac{\beta}{p}}}{|x|^{m+2}}f\right\|_{L^{p}(\mathbb{G})} \\
\leq\left\|\frac{(a+b|x|^{\alpha})^{\frac{\beta}{p}}}{|x|^{m}}\mathcal{R}^{2}f\right\|_{L^{p}(\mathbb{G})}.
\end{multline*}
This iteration process gives
\begin{equation*}\label{Lpweighted5}
\prod_{j=0}^{k-1}\left(\frac{Q-p}{p}-(m+j)\right)
\left\|\frac{(a+b|x|^{\alpha})^{\frac{\beta}{p}}}{|x|^{m+k}}f\right\|_{L^{p}(\mathbb{G})}
\leq\left\|\frac{(a+b|x|^{\alpha})^{\frac{\beta}{p}}}{|x|^{m}}\mathcal{R}^{k}f\right\|_{L^{p}(\mathbb{G})}
.
\end{equation*}
Similarly, we have for $\alpha \beta<0$, $pm-\alpha\beta\leq Q-2$ and $f\in C_{0}^{\infty}(\mathbb{G}\backslash\{0\})$
\begin{equation*}\label{Lpweighted6}
\prod_{j=0}^{k-1}\left(\frac{Q-p+\alpha\beta}{p}-(m+j)\right)
\left\|\frac{(a+b|x|^{\alpha})^{\frac{\beta}{p}}}{|x|^{m+k}}f\right\|_{L^{p}(\mathbb{G})}
\leq\left\|\frac{(a+b|x|^{\alpha})^{\frac{\beta}{p}}}{|x|^{m}}\mathcal{R}^{k}f\right\|_{L^{p}(\mathbb{G})}
,
\end{equation*}
completing the proof.
\end{proof}

\end{document}